\DeclareMathOperator{\sgn}{sgn}
\newcommand{\R}{\mathbf{R}}
\newcommand{\C}{\mathbf{C}}
\newtheorem{theorem}{Theorem}
\newtheorem{corollary} {Corollary}
\newtheorem{lemma}{Lemma}
\theoremstyle{definition}
\theoremstyle{definition}
\newtheorem{remark}{Remark}
\theoremstyle{definition}
\newtheorem{example}{Example}
\numberwithin{equation}{section}
\numberwithin{table}{section}
\numberwithin{figure}{section}
\DeclareMathAlphabet{\pazocal}{OMS}{zplm}{m}{n}
\numberwithin{equation}{section}
\title{On the non-existence of oscillation numbers in Sturm-Liouville theory }
\author[1]{\textbf{Angelo B. Mingarelli}\footnote{Corresponding author. Email: angelo@math.carleton.ca}}
\affil[1]{School of Mathematics and Statistics, Carleton University, Ottawa, Canada}
\newcommand{\doi}[1]{\url{https://doi.org/#1}}
\DeclareMathOperator\supp{supp}
\renewcommand{\maketitle}{\bgroup\setlength{\parindent}{0pt}

\vspace{1truecm}
\begin{center}{\vbox{\titlefont\@title}}\end{center}
\vspace{0.5truecm}
\begin{center}{\@author} \end{center}

\egroup
}
\renewcommand{\@fnsymbol}[1]{%
    \ifcase#1 \or {\,\Letter\!} \or\textasteriskcentered\or \textasteriskcentered\textasteriskcentered 
    \else\@ctrerr\fi}
\newcommand*{\titlefont}{\fontsize{18}{21.6}\selectfont\textbf}
\def\R{\mathbb{R}}
\begin{document}

\maketitle

\pagestyle{plain}

\begin{center}
\noindent
\begin{minipage}{0.85\textwidth}\parindent=15.5pt



{\small{
\noindent {\bf Abstract.}
We prove an old conjecture that relates the existence of non-real eigenvalues of Sturm-Liouville Dirichlet problems on a finite interval to the non-existence of oscillation numbers of its real eigenfunctions, [\cite{atm2}, p.104, Problems 3 and 5]. This extends to the general case, a previous result in \cite{alm1}, \cite{alm2} where it was shown that the presence of even one pair of non-real eigenvalues implies the non-existence of a positive eigenfunction (or ground state). We also provide estimates on the Haupt and Richardson indices and Haupt and Richardson numbers thereby complementing the original Sturm oscillation theorem with the Haupt-Richardson oscillation theorem discovered over 100 years ago with estimates on the missing oscillation numbers of the real eigenfunctions observed.
\smallskip

\noindent {\bf{Keywords:}} Sturm-Liouville, eigenvalues, non-definite, degenerate real ghosts, non-degenerate real ghosts, ground states, complex ghosts, ghost states, oscillations, Haupt index, Haupt number, Richardson index, Richardson number.
\small

\noindent{\bf{2020 Mathematics Subject Classification:}}  34B24, 34L10, 34C10
}}

\end{minipage}
\end{center}

\section{Introduction}
The classical Sturm-Liouville boundary problem is one of the oldest and most important problems in applied mathematics. Its applications span the earliest ones from vibrating strings and heat conduction to quantum mechanics and beyond. The so-called problem with {\it fixed ends} (i.e., the homogeneous Dirichlet problem)  is expressed most commonly by asking for those values of $\lambda \in \C$ (called eigenvalues) such that the equation 
\begin{equation}\label{e1}
 Ly:= - (p(x)y^\prime)^\prime + q(x)\, y = \lambda w(x) y,\quad \quad y(a) = 0 = y(b),
\end{equation}
has a nontrivial solution (called an eigenfunction) satisfying the boundary conditions in \eqref{e1}. Motivated by its physical applications, in the period from 1829-1840 both Sturm and Liouville, separately and jointly, considered this question in the special cases where the coefficients $p, q, r$ are positive and continuous functions on $[a, b]$ and proved the existence of an infinite number of real eigenvalues that is bounded below, having no finite point of accumulation (no complex eigenvalues), an oscillation theorem for the eigenfunctions, and eventually an expansion theorem.  

The spectrum of \eqref{e1} comprises the collection of its eigenvalues. This spectrum can vary dramatically as the sign conditions on the coefficients are relaxed to the point where the whole complex plane can be the spectrum, \cite{atm}. For a brief history as to the development of the various cases we refer the reader to \cite{abm}. 

Classically, the Sturm oscillation theorem states that if the eigenvalues \{$\lambda_m$\} of \eqref{e1} are ordered in an increasing fashion so that $-\infty < \lambda_0 < \lambda_1< \cdots < \lambda_m < \cdots$, and $\lambda_m \to \infty$ as $m \to \infty$, then for every natural number $n$,  an eigenfunction corresponding to an eigenvalue $\lambda_n$ will have exactly $n$ zeros in $(a, b)$. This result had various extensions to the so-called {\it polar} and {\it orthogonal} cases (in older terminology) or {\it left-definite} and {\it right-definite}, respectively,  as they are known today. The case where $w(x)$ changes sign is commonly known as an {\it indefinite} case but, unfortunately, this can also include the polar case. Thus, in keeping with older terminology, we use the term {\bf non-definite} for the general case where both $q, w$ change their sign on $[a, b]$ and $p(x)>0$ there. Specifically, \eqref{e1} is in the non-definite case if each of the quadratic forms
$$\int_a^b (p|y^\prime|^2+q|y|^2)\, dx, \quad \quad \int_a^b w|y|^2\, dx,$$
are sign indefinite on the space of all functions $y$ such that $y, py^\prime$, ($p(x)>0$ a.e.) are absolutely continuous and satisfy the boundary conditions in \eqref{e1}.

The qualitative and spectral theory theory behind the non-definite case is still not completely understood. What is known of the spectrum in the non-definite case of \eqref{e1} is that it consists of a doubly infinite sequence of real eigenvalues accumulating only at $\pm \infty$ and there is an at most finite, though possibly empty, set of non-real eigenvalues. 

A major advance in the general non-definite case was made (apparently independently) by both Otto Haupt and R.G.D. Richardson in \cite{oh}, \cite{rgdr} (see \cite{abm} for more details). They proved an analog of Sturm's oscillation theorem in the non-definite case which we state here for the sake of completeness and that we call the {\bf Haupt-Richardson oscillation theorem}:

\begin{theorem}\label{hr}{\rm [Haupt-Richardson, \cite{abm}]} In the non-definite case of \eqref{e1} there is an integer $n_R \geq 0$ such that for each $n \geq n_R$ there are at least two real solutions of  \eqref{e1} having n zeros in (a, b) while for $n < n_R$ there are no real solutions having n zeros in (a, b). Furthermore, there is a possibly different integer $n_H \geq n_R$ such that for each $n \geq n_H$ there are only two solutions having exactly n zeros in $(a, b)$.
\end{theorem}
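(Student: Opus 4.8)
The plan is to route everything through the Prüfer transformation. Write a nontrivial solution of the differential equation in \eqref{e1} satisfying $y(a)=0$, $(py')(a)=1$ in the form $y=\rho\sin\theta$, $py'=\rho\cos\theta$ with $\rho>0$ and $\theta(a,\lambda)=0$; then the Prüfer angle obeys
\[
\theta'=\frac{1}{p(x)}\cos^2\theta+\bigl(\lambda w(x)-q(x)\bigr)\sin^2\theta ,
\]
and whenever $\theta\in\pi\mathbf Z$ one has $\theta'=1/p(x)>0$. Thus $\theta(\cdot,\lambda)$ increases strictly and transversally through each multiple of $\pi$, never returns below a multiple it has passed, and $\theta(b,\lambda)>0$ for all $\lambda$. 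Set $F(\lambda):=\theta(b,\lambda)$; since the equation is affine in $\lambda$, its solutions with fixed initial data are entire in $\lambda$, so $F$ is real-analytic on $\R$, and by the transversality $\lambda$ is an eigenvalue of \eqref{e1} exactly when $F(\lambda)=m\pi$ for some integer $m\ge1$, the eigenfunction then having precisely $m-1$ zeros in $(a,b)$. The whole theorem is therefore a statement about the level sets $F^{-1}(m\pi)$.

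The first step is to show $F(\lambda)\to+\infty$ as $\lambda\to+\infty$ and as $\lambda\to-\infty$. Non-definiteness of $\int_a^b w|y|^2$ forces $w$ to exceed a positive constant $\delta$ on a subinterval $[\alpha,\beta]$ and to lie below $-\delta$ on a subinterval $[\alpha',\beta']$. On $[\alpha,\beta]$, for $\lambda$ large $\lambda w-q$ is uniformly large, so a Sturm comparison with a constant-coefficient minorant shows $\theta$ passes at least $c\sqrt\lambda$ multiples of $\pi$ there; as $\theta$ never drops below a multiple it has passed, $F(\lambda)\ge c\pi\sqrt\lambda$. The same argument on $[\alpha',\beta']$ with $\lambda\to-\infty$ (where $\lambda w\ge\delta|\lambda|$) gives $F\to+\infty$ there too. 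Continuity of $F$ together with these two limits yields a finite minimum $m_*:=\min_{\lambda\in\R}\theta(b,\lambda)>0$. Assume for the moment $m_*\notin\pi\mathbf Z$ (the borderline $m_*\in\pi\mathbf Z$ produces a single real eigenfunction realising its oscillation count only once --- a ``real ghost'', the object studied in the rest of the paper) and put $n_R:=\lceil m_*/\pi\rceil-1$. If $n<n_R$ then $(n+1)\pi<m_*$, so $(n+1)\pi\notin F(\R)$ and no real solution has $n$ zeros. If $n\ge n_R$ then $(n+1)\pi>m_*$; fixing a minimiser $\lambda_*$ and applying the intermediate value theorem separately on $(-\infty,\lambda_*]$ and on $[\lambda_*,+\infty)$ --- on each of which $F$ runs from $+\infty$ down to $m_*$ and back to $+\infty$ --- yields a solution of $F(\lambda)=(n+1)\pi$ on each side, i.e.\ at least two real eigenfunctions with $n$ zeros. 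This is the first assertion.

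For the ``exactly two'' statement I would bring in the variational identity, obtained from the Lagrange identity applied to $y(\cdot,\lambda)$ and $\partial_\lambda y(\cdot,\lambda)$ (the endpoint terms at $a$ vanishing since $y(a,\lambda)=\partial_\lambda y(a,\lambda)=0$):
\[
\frac{\partial\theta}{\partial\lambda}(b,\lambda)=\frac{1}{\rho(b,\lambda)^2}\int_a^b w(x)\,y(x,\lambda)^2\,dx .
\]
Hence $F$ can cross a level $m\pi$ downward only at a real eigenvalue whose eigenfunction is of negative type, $\int_a^b w\,y^2<0$. The core of the proof is then the claim that every sufficiently large positive eigenvalue has $\int_a^b w\,y^2>0$ and every sufficiently negative one has $\int_a^b w\,y^2<0$. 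I would establish this by a turning-point/WKB analysis: for $|\lambda|$ large, $[a,b]$ splits into ``oscillatory'' arcs where $\lambda w-q>0$ and ``hyperbolic'' arcs where $\lambda w-q<0$; on an oscillatory arc the eigenfunction has a slowly varying WKB amplitude, whereas on a hyperbolic arc $|y|$ may grow exponentially (at rate $\sim\sqrt{|\lambda|}$) across it, so that the $w$-weighted $L^2$ mass of a hyperbolic arc is smaller by a factor $O(|\lambda|^{-1/2})$ than that of the oscillatory arc adjoining its larger end. As the number of arcs is bounded, summing shows $\int_a^b w\,y^2$ has the sign of $\sgn(\lambda)$ for $|\lambda|$ large.

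Granting this, there is a $\Lambda>0$ such that on $[\Lambda,\infty)$ no real eigenfunction is of negative type; hence once $F$ exceeds a level $m\pi$ in this right tail it never returns to it, so $F$ meets that level exactly once on $[\Lambda,\infty)$, and symmetrically exactly once on $(-\infty,-\Lambda]$. Since $F$ is bounded on the compact interval $[-\Lambda,\Lambda]$, for all sufficiently large $m$ the set $F^{-1}(m\pi)$ consists of precisely two points; taking $n_H$ to be the larger of $n_R$ and this threshold gives the second assertion, with $n_H\ge n_R$ automatic. The step I expect to be the genuine obstacle is this last one --- the sign-definiteness of $\int_a^b w\,y(\cdot,\lambda)^2\,dx$ for $|\lambda|$ large --- which needs uniform WKB/connection estimates through the turning set $\{\lambda w=q\}$ and the bookkeeping that the oscillatory arcs dominate the hyperbolic ones in the $w$-weighted norm; degeneracies of $w$ such as high-order zeros, or $w$ vanishing at an endpoint, would make that comparison more delicate.
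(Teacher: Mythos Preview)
The paper does not prove Theorem~\ref{hr}; it is quoted as a classical result of Haupt \cite{oh} and Richardson \cite{rgdr}, with a pointer to the survey \cite{abm}, so there is nothing in the paper to compare your argument against directly. That said, your Pr\"ufer-angle approach is the standard route to this statement. The first half --- $F(\lambda):=\theta(b,\lambda)\to+\infty$ as $\lambda\to\pm\infty$ via Sturm comparison on subintervals where $w$ has a definite sign, combined with the irreversibility of $\theta$ through multiples of $\pi$, and then the intermediate-value construction of $n_R$ from $\min F$ --- is correct and complete; the borderline case $\min F\in\pi\mathbf Z$ that you flag is exactly the degenerate-real-ghost phenomenon the paper goes on to study.

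For $n_H$ you correctly reduce everything to the claim that $\int_a^b w\,y(\cdot,\lambda)^2\,dx>0$ at all sufficiently large positive eigenvalues (and $<0$ at large negative ones), and your deduction from that claim is sound: $F'>0$ at each eigenvalue in the right tail forbids a second crossing of any level $m\pi$ there, $F$ is bounded on the compact middle interval, so sufficiently high levels are met exactly once on each side. The point worth making explicit is that this sign claim is \emph{precisely} the statement that there are only finitely many real ghosts (degenerate or non-degenerate), which the present paper also does not prove but cites from \cite{abm2}. Your WKB/turning-point sketch captures the right intuition, but as you yourself anticipate it runs into genuine trouble for merely integrable $w$ or for $w$ with high-order zeros; the argument in \cite{abm2} is not an asymptotic one, and you would be on firmer ground invoking that result outright than trying to push the connection-formula bookkeeping through in full generality.
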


The usual Sturm oscillation theorem is subsumed in the preceding as the case where $n_R=n_H=0$. The quantities $n_R$, $n_H$ whose study form, in part, the body of this work will be defined next.

Let $\lambda > 0$. Referring to Theorem~\ref{hr}, the smallest natural number $n_R$ such that for every $n\geq n_R$ there is {\it at least} one solution of \eqref{e1} oscillating $n$ times in $(a, b)$ is called the {\bf  Richardson index}. On the other hand, the smallest natural number $n_H$ such that for every $n\geq n_H$ \eqref{e1} has {\it exactly} one solution oscillating $n$ times in $(a, b)$ is called the {\bf Haupt index}. To the best of our knowledge the only treatment of the Richardson index for non-definite problems appeared in \cite{alm1}, \cite{alm2}, \cite{aj}, \cite{km}, \cite{mkm}, \cite{km2} and the references therein. Similar definitions apply in the case where $\lambda <0$. 

Incidentally, the notion of a {\bf Richardson number} (it should have been called the {\it Haupt number} for consistency), is defined as that smallest eigenvalue  denoted by $\Lambda_R$, such that for all other eigenvalues $\lambda \geq \Lambda_R$ there is exactly {\bf exactly one} eigenfunction $y(x, \lambda)$ oscillating $n$ times for all sufficiently large $n$. This was originally mentioned in \cite{aj} and some results as to its estimates there were obtained in a special case there. The first estimation of the Richardson number for a problem with one turning point appeared in \cite{aj}. Estimations in the case of two turning points can be found in \cite{km} and the references therein.

In order to be consistent with existing nomenclature, the {\bf Haupt number} (it should really be called the Richardson number) must now be defined as that smallest eigenvalue of \eqref{e1} say, $\Lambda_H$, such that for all eigenvalues $\lambda \geq \Lambda_H$ we have {\bf at least one} eigenfunction $y(x, \lambda)$ oscillating $n$ times. That there can be more than one eigenfunction oscillating a fixed number of times is now well-known (see \cite{aj}, \cite{km}).

In this paper we will give general estimates for both the Haupt and Richardson indices thereby improving on those obtained in \cite{alm2}. We also give estimates for both the Haupt and Richardson numbers. Our results will be stated for positive eigenvalues only as analogous results for the negative eigenvalues may be stated readily by replacing $\lambda$ by $-\lambda$ and $r(x)$ by $-r(x)$ in the sequel. We leave these changes to the reader. 

Thus, after some basic notions and lemmas in Section~\ref{prel} which culminate with a rewriting of the minimum principle for real eigenvalues, we prove our main results in Section~\ref{main}. The main purpose of  Section~\ref{main} is to prove a conjecture, long left unanswered, [\cite{atm2}, p.104, Problem 5], which generalises a key result in \cite{alm2} where it was shown that the existence of just one pair of non-real eigenvalues implies the non-existence of a real eigenfunction that has one sign in $(a, b)$, i.e., $n_R \geq 1$. In Theorem~\ref{th4} we show that, essentially, $n_R \geq m+n,$ where $m$ is the number of pairs of non-conjugate distinct non-real eigenvalues of \eqref{e1} (also called ghost states) and $n$ is the number of real eigenvalues corresponding to degenerate ghost states (see the next section for definitions). That this value of $n < \infty$ is a consequence of results in \cite{abm2}. After an excursion into Lyapunov's inequality we present some bounds on all four numbers $n_H, n_R, \Lambda_H, \Lambda_R$ that complement the bounds in \cite{alm2}. 

\section{Preliminaries}\label{prel}
Much of the notation that follows regarding {\it ghosts} is due to W. Pauli \cite{kvm} in letters to K\"all\'en and Lee (1954) and, later on, W. Heisenberg (1955), see \cite{abm} for other references. An eigenfunction of \eqref{e1} corresponding to a non-real eigenvalue $\lambda$ will be called a {\bf complex ghost}, see \cite{abm}. Complex ghosts always satisfy
$$\int_a^b |y|^2\, w\, dx =0.$$
A complex ghost is said to be {\bf degenerate} if $$\int_a^b y^2\, w\, dx =0,$$ and {\bf non-degenerate} if the preceding integral is non-zero.

A real eigenfunction $u$ with corresponding real eigenvalue $\lambda$ will be called a {\bf degenerate real ghost} if it 
satisfies $$\int_a^b u^2\, w\, dx =0. $$  It will be called a {\bf non-degenerate real ghost} if 
$$ \lambda \int_a^b u^2\, w\, dx < 0.$$ The number of real ghosts (whether degenerate or not) is always finite, \cite{abm2}. The possible presence of such ghosts should not be surprising as we  allow $w(x)$ to change its sign on  $(a, b)$. Since the coefficients in \eqref{e1} are real, it is easy to see that complex ghosts occur in complex conjugate pairs, i.e., if $\lambda, \varphi$ is such an eigenvalue/eigenfunction pair, then $\bar{\lambda}, \bar{\varphi}$ is another such pair. By a {\bf ground state} of \eqref{e1} is meant a real eigenfunction $u$ with $u(x) \neq 0$ for $x \in (a, b)$. In the sequel we will generally use $u, v, \ldots$ for real eigenfunctions and $\varphi, \psi,\ldots$ for non-real eigenfunctions.

For the sake of simplicity we will sometimes assume that the functions $p, q, w \in C[a, b]$ (piecewise continuous is sufficient as is $L^1[a, b]$)  where $[a, b]=I \subset \R$ is a closed bounded interval, $p(x)>0$ for all $x\in I$ and non-empty support, $\supp w(x)$, of $w$. Thus, in addition to the usual conditions that $w(x)$ must change its sign on $I$, we also allow $w(x)\equiv 0$ on intervals (or sets of positive Lebesgue measure) in some cases. If $1/p, q, w \in L^1[a, b]$ and $|p(x)|<\infty$ a.e. in $[a, b]$ then it is well known (e.g., [\cite{atk0}, Chapter 8]) that solutions $y$ of \eqref{e1} and the quantity, $py^\prime$ are both in $AC[a, b]$ and \eqref{e1} is satisfied a.e. So, if $p(x)=1$, $q, r \in L^1[a, b]$ then all solutions are necessarily $C^1(a, b)$.

The first few lemmas are classical, easily shown, and so the proofs are omitted, \cite{abm}. In the sequel, unless otherwise noted, we will suppress the independent variable in the various differential or integral expressions for ease of use.
\begin{lemma}\label{lem1}
Let $\mu$ be a real eigenvalue of \eqref{e1}, $u$ a corresponding real eigenfunction, and let $\lambda$, $\varphi$, be a non-real eigenvalue/eigenfunction of \eqref{e1}. Then,
\begin{equation}\label{e2}
\int_a^b u\,\varphi\, w\, dx = 0\quad  \text{and}\quad  \int_a^b u\,\bar{\varphi}\, w\, dx = 0.
\end{equation}
\end{lemma}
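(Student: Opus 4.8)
The plan is to use the standard bilinear (Lagrange) identity for the Sturm-Liouville operator $L$ together with the self-adjointness encoded in the Dirichlet boundary conditions. First I would write down the Green/Lagrange formula: for any two functions $y,z$ with $y,py',z,pz' \in AC[a,b]$ one has
\[
\int_a^b (zLy - yLz)\, dx = \bigl[p(yz' - zy')\bigr]_a^b,
\]
which follows from integrating $-(py')'z + (pz')'y = -(p(y z' - z y'))'$ by parts. Applying this with $y=u$ (so $Lu = \mu w u$) and $z=\varphi$ (so $L\varphi = \lambda w \varphi$), and using that both $u$ and $\varphi$ vanish at $a$ and $b$, the right-hand side is $0$, hence
\[
(\mu - \lambda)\int_a^b u\,\varphi\, w\, dx = 0.
\]

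The key step is then to observe that $\mu - \lambda \neq 0$: since $\mu$ is real and $\lambda$ is non-real, they cannot coincide. Dividing through gives $\int_a^b u\,\varphi\, w\, dx = 0$, which is the first identity in \eqref{e2}. For the second identity, I would note that because the coefficients $p,q,w$ are real, taking complex conjugates of $L\varphi = \lambda w \varphi$ and of the boundary conditions shows that $\bar\varphi$ is an eigenfunction for the eigenvalue $\bar\lambda$, which is again non-real (as $\lambda$ is non-real), so $\bar\lambda \neq \mu$. Repeating the Lagrange-identity argument with $z = \bar\varphi$ in place of $\varphi$ yields $(\mu - \bar\lambda)\int_a^b u\,\bar\varphi\, w\, dx = 0$, and hence $\int_a^b u\,\bar\varphi\, w\, dx = 0$.

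There is essentially no obstacle here; the only point requiring a word of care is the regularity under which the integration by parts is valid, but this is exactly the setting recorded in the Preliminaries (solutions $y$ and $py'$ lie in $AC[a,b]$ under the stated hypotheses on $1/p,q,w$), so the boundary term is well defined and the computation is legitimate. As the excerpt itself notes, this lemma is classical, so I would expect the argument to be stated briefly, with the Lagrange identity and the reality/non-reality dichotomy $\mu \neq \lambda$, $\mu \neq \bar\lambda$ doing all the work.
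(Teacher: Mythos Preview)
Your argument is correct and is exactly the classical Lagrange-identity proof the paper has in mind; in fact the paper omits the proof entirely, remarking that the lemma is ``classical, easily shown.'' (One harmless sign slip: the pointwise identity should read $-(py')'z + (pz')'y = \bigl(p(yz'-zy')\bigr)'$, without the extra minus sign, but since the boundary term vanishes under the Dirichlet conditions this does not affect the conclusion.)
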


\begin{lemma}\label{lem2}
Let $\lambda_1, \lambda_2$, be non-real eigenvalues of \eqref{e1} with $\lambda_1 \neq  \bar{\lambda}_2$ and $\varphi_1, \varphi_2$ their corresponding eigenfunctions. Then,
\begin{equation}\label{e3}
\int_a^b (p {\varphi_1}^\prime {\bar {\varphi_2}}^\prime+ q \varphi_1 \bar{\varphi_2})\, dx =0\quad \text{and} \quad 
\int_a^b \varphi_1\, \bar{\varphi}_2 \,w\, dx = 0.
\end{equation}
Similarly, whenever $\lambda_1 \neq  \lambda_2$, 
\begin{equation}\label{e4}
\int_a^b \varphi_1\, \varphi_2 \,w\, dx = 0.
\end{equation}
Finally, for any non-real eigenvalue/eigenfunction pair $\lambda, \varphi$, we always have both 
\begin{equation}\label{e5}
\int_a^b (p |{\varphi}^\prime|^2 + q |\varphi|^2)\, dx =0  \quad \text{and} \quad \int_a^b |\varphi|^2 \,w\, dx = 0.
\end{equation}
\end{lemma}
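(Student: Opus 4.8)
The plan is to obtain every identity in \eqref{e3}--\eqref{e5} from a single source, namely the Lagrange (Green) identity for $L$ together with the Dirichlet boundary conditions — this is just the classical self-adjointness computation for \eqref{e1}. Concretely, I would first record that for any two functions $y,z$ with $y,\,py',\,z,\,pz'\in AC[a,b]$,
$$\int_a^b \bigl(z\,Ly - y\,Lz\bigr)\, dx \;=\; \bigl[\,p\,(y\,z' - z\,y')\,\bigr]_a^b ,$$
and that the right-hand side vanishes as soon as $y$ and $z$ both satisfy $y(a)=y(b)=0$. Under the running hypotheses ($1/p,q,w\in L^1[a,b]$, $|p|<\infty$ a.e.) the eigenfunctions $\varphi_j$ and their quasi-derivatives $p\varphi_j'$ lie in $AC[a,b]$, so this identity applies to them; checking this admissibility is the only step that is not pure algebra.

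For the relations in \eqref{e3}, note that since $p,q,w$ are real, the conjugate $\bar\varphi_2$ solves $L\bar\varphi_2=\bar\lambda_2\,w\,\bar\varphi_2$. Taking $y=\varphi_1$, $z=\bar\varphi_2$ in the Green identity yields $(\lambda_1-\bar\lambda_2)\int_a^b \varphi_1\bar\varphi_2\,w\,dx = 0$, and the hypothesis $\lambda_1\neq\bar\lambda_2$ forces $\int_a^b \varphi_1\bar\varphi_2\,w\,dx = 0$, which is the second identity in \eqref{e3}. Multiplying $L\varphi_1=\lambda_1 w\varphi_1$ by $\bar\varphi_2$, integrating over $[a,b]$, and integrating the $-(p\varphi_1')'$ term by parts (the boundary term dies because $\bar\varphi_2(a)=\bar\varphi_2(b)=0$) then gives
$$\int_a^b \bigl(p\,\varphi_1'\,\bar\varphi_2' + q\,\varphi_1\,\bar\varphi_2\bigr)\, dx \;=\; \lambda_1\int_a^b \varphi_1\,\bar\varphi_2\,w\, dx \;=\; 0 ,$$
which is the first identity in \eqref{e3}.

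Relation \eqref{e4} is the same computation without conjugation: applying the Green identity to $y=\varphi_1$, $z=\varphi_2$ and using $L\varphi_2=\lambda_2 w\varphi_2$ gives $(\lambda_1-\lambda_2)\int_a^b \varphi_1\varphi_2\,w\,dx = 0$, so $\lambda_1\neq\lambda_2$ yields \eqref{e4}. Finally, \eqref{e5} is the specialization $\varphi_1=\varphi_2=\varphi$, $\lambda_1=\lambda_2=\lambda$ of the argument for \eqref{e3}: a non-real $\lambda$ satisfies $\lambda\neq\bar\lambda$, so that argument immediately gives $\int_a^b |\varphi|^2 w\,dx=0$, and feeding this back into the weak form as above gives $\int_a^b (p|\varphi'|^2 + q|\varphi|^2)\,dx = \lambda\int_a^b |\varphi|^2 w\,dx = 0$. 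I do not expect any real obstacle: the entire content is the self-adjointness identity plus the non-vanishing of the relevant eigenvalue differences, and the only thing to be careful about is that the integrability assumptions on $p,q,w$ genuinely justify the integration by parts and the vanishing of the boundary terms.
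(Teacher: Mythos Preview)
Your argument is correct and is exactly the classical self-adjointness/Lagrange identity computation one would expect; the paper in fact omits the proof entirely, stating that the result is ``classical, easily shown'' and referring to \cite{abm}. There is nothing to add: your derivation of \eqref{e3}--\eqref{e5} via the Green identity plus the non-vanishing of $\lambda_1-\bar\lambda_2$, $\lambda_1-\lambda_2$, and $\lambda-\bar\lambda$ is the standard route and fills in precisely what the paper leaves to the reader.
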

\begin{remark}
In other words, \eqref{e5} states that all non-real eigenfunctions corresponding to non-real eigenvalues are (complex) ghost states (or ghosts, for short). In addition, observe that both equalities in \eqref{e5} hold whenever $\varphi$ is a degenerate real ghost. 
\end{remark}

In the sequel we will always assume that $\mathcal{S} = \supp w$ has positive Lebesgue measure. 

\begin{lemma}\label{lem3}
For any $i, j$ satisfying $1 \leq i < j \leq n$, let $\lambda_i, \lambda_j$ with $\lambda_i \neq \lambda_j$ be any set of eigenvalues of \eqref{e1} with eigenfunctions $\varphi_i, \varphi_j$. Then the set $\{\varphi_1, \varphi_2, \ldots, \varphi_n\}$ of all such eigenfunctions is a linearly independent set over $\mathcal{S}$.
\end{lemma}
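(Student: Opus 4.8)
The plan is to reduce the statement to linear independence on a single open subinterval of $\mathcal{S}=\supp w$ on which $w$ does not vanish, and then to run the classical ``differentiate, then subtract'' argument that establishes linear independence of eigenfunctions of a second-order differential operator corresponding to distinct eigenvalues.

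First I would choose a nonempty open interval $J=(\alpha,\beta)\subseteq\mathcal{S}$ on which $w(x)\neq 0$ throughout; this is possible because $\mathcal{S}$ has positive measure and, under the continuity assumptions in force for simplicity, $\{w\neq0\}$ is open and nonempty. Since any linear combination that vanishes on $\mathcal{S}$ a fortiori vanishes on $J$, it suffices to prove the (formally stronger) assertion that $\{\varphi_1,\dots,\varphi_n\}$ is linearly independent on $J$. I would then record the basic remark that no eigenfunction $\varphi_k$ vanishes identically on $J$: otherwise $\varphi_k$ and $p\varphi_k'$ would vanish together at some point of $J$, and uniqueness for the initial value problem associated with \eqref{e1} would force $\varphi_k\equiv0$ on $[a,b]$, contradicting that $\varphi_k$ is an eigenfunction. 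Now argue by induction on $n$. For $n=1$ the relation $c_1\varphi_1=0$ on $J$ together with $\varphi_1\not\equiv0$ on $J$ gives $c_1=0$. For the inductive step, assume $\sum_{k=1}^n c_k\varphi_k=0$ on $J$. Differentiating this identity on $J$, multiplying by $p$, differentiating once more (legitimate since each $\varphi_k$ and $p\varphi_k'$ is absolutely continuous), and inserting the equations $-(p\varphi_k')'+q\varphi_k=\lambda_k w\varphi_k$ yields $w\sum_{k=1}^n c_k\lambda_k\varphi_k=0$ on $J$, hence $\sum_{k=1}^n c_k\lambda_k\varphi_k=0$ on $J$ because $w\neq0$ there. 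Subtracting $\lambda_n$ times the original identity eliminates the $k=n$ term and leaves $\sum_{k=1}^{n-1}c_k(\lambda_k-\lambda_n)\varphi_k=0$ on $J$, a relation among $n-1$ eigenfunctions with pairwise distinct eigenvalues; the induction hypothesis gives $c_k(\lambda_k-\lambda_n)=0$, so $c_k=0$ for $k\le n-1$ since $\lambda_k\neq\lambda_n$, and then $c_n\varphi_n=0$ on $J$ forces $c_n=0$.

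The algebraic core above is routine; I expect the only real obstacle to be technical care about regularity and about the structure of $\mathcal{S}$. In the general $L^1$ setting $\{w\neq0\}$ may have empty interior, so no interval $J$ is available; one must then argue instead that if $\psi:=\sum_k c_k\varphi_k$ vanishes a.e.\ on $\mathcal{S}$ then, at density points of $\mathcal{S}$ where the absolutely continuous functions $\psi$ and $p\psi'$ are differentiable, one has $\psi'=0$ and $(p\psi')'=0$ a.e.\ on $\mathcal{S}$, whence $L\psi=0$ and therefore $w\sum_k c_k\lambda_k\varphi_k=0$ a.e., so the continuous function $\sum_k c_k\lambda_k\varphi_k$ vanishes a.e.\ on $\{w\neq0\}$ and hence on all of $\mathcal{S}$ by density; the base case of the induction then rests on the fact that a nontrivial solution of \eqref{e1} cannot vanish on a set of positive measure, which again follows from the density-point argument together with uniqueness. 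It is worth noting that the orthogonality relations of Lemmas~\ref{lem1}--\ref{lem2} are not used here, and in fact would not suffice on their own: pairing $\psi=0$ with $\varphi_j w$ and integrating only yields $c_j\int_a^b\varphi_j^2\,w\,dx=0$, which is vacuous precisely when $\varphi_j$ is a degenerate (real or complex) ghost.
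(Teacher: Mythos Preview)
Your proposal is correct and follows essentially the same approach as the paper: both arguments restrict to the set where $w\neq 0$, apply the operator $L$ to the vanishing linear combination, and divide by $w$ to obtain a new relation with an extra factor of $\lambda_i$ on each term. The only difference is cosmetic---you eliminate one eigenfunction at a time by subtracting $\lambda_n$ times the previous relation and invoke induction, whereas the paper iterates the application of $L$ to obtain the full system $\sum_i c_i\lambda_i^m\varphi_i=0$ for $m=0,\dots,n-1$ and then appeals to the nonvanishing of the Vandermonde determinant; your version is in fact more careful than the paper's about the need to work on an interval (or positive-measure set) where differentiation is legitimate and division by $w$ is allowed.
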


\begin{proof} 
Letting $\sum_{i=1}^n c_i\, \varphi_i =0,$ as usual and applying the operator $L$ we obtain, for $x\in \mathcal{S}$, 
\begin{equation}\label{e6}
\sum_{i=1}^n c_i\, \lambda_i w \varphi_i =0 \Longrightarrow \sum_{i=1}^n c_i\, \lambda_i  \varphi_i =0, \quad x\in \mathcal{S}.
\end{equation}
Applying $L$ once again to the right side of \eqref{e6} and iterating this procedure we obtain the system of $n$ equations in the $n$ unknowns, $c_i\varphi_i$, valid for $x \in \mathcal{S}$, and for $m= 0,1,2,..., n-1,$ i.e., 
\begin{equation}\label{e7}
\sum_{i=1}^n c_i\, \lambda_i^m  \varphi_i =0, \quad x\in \mathcal{S}.
\end{equation}
The determinant of the latter matrix system is the Vandermonde determinant, 
$$\prod_{1 \leq i< j \leq n} (\lambda_j - \lambda_i) \neq 0,$$
by hypothesis. It follows that all $c_i\varphi_i =0$, $x \in \mathcal{S}$, and so $c_i=0$ for all $i$.
\end{proof}
\begin{remark}
Clearly, in the case where $w$ is continuous, the result holds if $\supp w$ is replaced by any interval over which $w(x) \neq 0$. Furthermore, this lemma also holds if all the eigenvalues are real and distinct.
\end{remark}

The next result is an alternate characterization of the real eigenvalues of Sturm-Liouville problems as a minimum of a quadratic functional over an appropriate space, but without sign restrictions on the weighted square-integrable generally indefinite inner products. This allows for ghosts as defined earlier.

Fix a differentiable function $u$ and consider the ideal $\mathcal{V}$ generated by $u$ in the ring of all complex-valued differentiable functions, $\eta$, on $(a, b)$. Thus, 
$$\mathcal{V} = \{ \varphi : \varphi = u\eta, \eta \in C^1(a, b)\},$$
\begin{lemma}\label{lem4}
Let $\lambda_u \in \R$ be an eigenvalue of \eqref{e1} with real eigenfunction $u$.  Then, for any $\varphi \in \mathcal{V}$,
\begin{equation}\label{e8}
\int_a^b p|\varphi^\prime|^2 +q|\varphi|^2 \geq \lambda_u \int_a^b |\varphi|^2\, w,
\end{equation}
with equality holding iff $\varphi = \alpha u$, $\alpha \in \C$, on $I$.
\end{lemma}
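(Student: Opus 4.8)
The plan is to prove \eqref{e8} by the classical Picone substitution $\varphi = u\eta$: after this change of variables the left-hand side of \eqref{e8} becomes a manifestly nonnegative quantity plus $\lambda_u\int_a^b|\varphi|^2\,w$, so both the inequality and the equality case fall out at once.

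First I would write $\varphi = u\eta$ with $\eta \in C^1(a,b)$, so that $\varphi' = u'\eta + u\eta'$. Using that $u$ and $u'$ are real and that $2\,\mathrm{Re}(\eta\overline{\eta'}) = (|\eta|^2)'$, expanding $|\varphi'|^2$ gives the pointwise identity
\begin{equation*}
p|\varphi'|^2 + q|\varphi|^2 = p\,u^2\,|\eta'|^2 + \bigl(p(u')^2 + q\,u^2\bigr)|\eta|^2 + p\,u'u\,(|\eta|^2)'.
\end{equation*}
I would then integrate the last term by parts over $(a,b)$, dropping the boundary contribution $\bigl[p\,u'u\,|\eta|^2\bigr]_a^b$ because $u(a) = u(b) = 0$; since $(p\,u'u)' = (pu')'u + p(u')^2$, this yields $-\int_a^b\bigl((pu')'u + p(u')^2\bigr)|\eta|^2$, and the two occurrences of $p(u')^2|\eta|^2$ cancel, leaving
\begin{equation*}
\int_a^b\bigl(p|\varphi'|^2 + q|\varphi|^2\bigr) = \int_a^b p\,u^2\,|\eta'|^2 + \int_a^b\bigl(q\,u^2 - (pu')'u\bigr)|\eta|^2.
\end{equation*}

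Next I would invoke the differential equation: multiplying $-(pu')' + qu = \lambda_u w u$ by $u$ gives $q\,u^2 - (pu')'u = \lambda_u w\,u^2$ a.e., so the second integral above equals $\lambda_u\int_a^b w\,u^2|\eta|^2 = \lambda_u\int_a^b w\,|\varphi|^2$. Since $p > 0$,
\begin{equation*}
\int_a^b\bigl(p|\varphi'|^2 + q|\varphi|^2\bigr) - \lambda_u\int_a^b w\,|\varphi|^2 = \int_a^b p\,u^2\,|\eta'|^2 \ge 0,
\end{equation*}
which is \eqref{e8}. Equality forces $\int_a^b p\,u^2|\eta'|^2 = 0$, hence $\eta' \equiv 0$ on the open set $\{x\in(a,b):u(x)\neq 0\}$; as $u$ is an eigenfunction of \eqref{e1}, uniqueness for the first-order system equivalent to \eqref{e1} forbids $u$ and $pu'$ from vanishing simultaneously, so the zeros of $u$ are isolated, and a continuous $\eta'$ that vanishes off a discrete set vanishes identically. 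Thus $\eta$ is a constant $\alpha\in\C$ and $\varphi = \alpha u$ on $I$, as claimed.

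I expect the genuinely delicate point to be the endpoint bookkeeping: the identities above rest on the vanishing of $\bigl[p\,u'u\,|\eta|^2\bigr]_a^b$, which — given $u(a)=u(b)=0$ — holds once $\eta$ is bounded near the endpoints (the relevant case here), or more generally whenever $|\varphi(x)|^2/(x-a)\to 0$ as $x\to a$ and $|\varphi(x)|^2/(b-x)\to 0$ as $x\to b$; for $\varphi$ in the natural form domain (so $\varphi(a)=\varphi(b)=0$ with $\int_a^b p|\varphi'|^2<\infty$) this is automatic, since $|\varphi(x)-\varphi(a)|\le\bigl(\int_a^x 1/p\bigr)^{1/2}\bigl(\int_a^x p|\varphi'|^2\bigr)^{1/2}$ forces precisely that decay. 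Everything else — the integration by parts, the substitution from the equation, and the step from ``$\eta'=0$ off the zeros of $u$'' to ``$\eta$ constant'' — is routine.
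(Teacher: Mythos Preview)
Your proof is correct and follows essentially the same Picone-type substitution $\varphi=u\eta$ as the paper: expand $p|\varphi'|^2+q|\varphi|^2$, integrate the cross term $p\,u'u\,(|\eta|^2)'$ by parts, and use the equation $-(pu')'+qu=\lambda_u wu$ to arrive at $\int_a^b p\,u^2|\eta'|^2\ge 0$. Your treatment is in fact a bit more explicit than the paper's in two places: you spell out why the boundary term $[p\,u'u\,|\eta|^2]_a^b$ vanishes, and in the equality case you argue carefully that $\eta'$ vanishes off the (isolated) zero set of $u$ and hence everywhere, whereas the paper simply rewrites $u^2|\eta'|^2=0$ as $|u\varphi'-u'\varphi|^2=0$ and declares the conclusion.
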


\begin{proof}
First we note that eigenfunctions of \eqref{e1} can only have finitely many zeros on $I$ (otherwise the point of accumulation would imply, along with existence and uniqueness, that the eigenfunction would vanish identically in some interval about it).
A simple, though lengthy calculation shows that
 \begin{gather}\label{e9}
p|\varphi^\prime|^2 +q|\varphi|^2  = p (u^\prime)^2 |\eta|^2 + p u^2|\eta^\prime|^2 + qu^2|\eta|^2 + uu^\prime(\bar{\eta}p\eta^\prime + \eta p\bar{\eta}^\prime) 
\end{gather}
Simple integrations and use of \eqref{e1} show that
\begin{gather}
\int_a^b  uu^\prime(\bar{\eta}p\eta^\prime + \eta p\bar{\eta}^\prime) = \int_a^b (pu^\prime) u (\eta \bar{\eta})^\prime =  \int_a^b (pu^\prime) u\, (|\eta|^2)^\prime \nonumber \\ 
= - \int_a^b p(u^\prime)^2 |\eta|^2 - \int_a^b u|\eta|^2 (pu^\prime)^\prime. \label{e10}
\end{gather}
Integrating \eqref{e9}, using \eqref{e10} and \eqref{e1} once again, we obtain,
\begin{gather}\label{e11}
\int_a^b p|\varphi^\prime|^2 +q|\varphi|^2  = \int_a^b p u^2 |\eta^\prime|^2 + \lambda_u \int_a^b |\varphi|^2 w .
\end{gather}
from which \eqref{e8} follows immediately. 

Observe that the integral $\int_a^b p u^2 |\eta^\prime|^2 = \int_a^b p u^2 |\left(\frac{\varphi}{u}\right)^\prime|^2$ converges and is zero iff $u^2 |\left(\frac{\varphi}{u}\right)^\prime|^2 =0$ or, equivalently, if $|u \varphi^\prime - u^\prime \varphi|^2 =0$ and the conclusion follows.
\end{proof}
\begin{corollary}\label{cor1}
Under the assumptions of the previous lemma, 
\begin{equation}\label{e11}
\min_{\varphi \in \mathcal{V}} \left (\int_a^b p|\varphi^\prime|^2 +q|\varphi|^2 - \lambda_u \int_a^b |\varphi|^2\, w \right ) \geq 0
\end{equation}
where equality holds iff $\varphi =\alpha u$ for some constant $\alpha \in \C$.
\end{corollary}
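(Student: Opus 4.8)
The plan is to obtain the corollary as an essentially immediate consequence of Lemma~\ref{lem4}, with only two small additional observations: that the infimum is nonnegative uniformly, and that it is actually attained.

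First I would note that inequality \eqref{e8} of Lemma~\ref{lem4} says precisely that the functional
\[
J(\varphi) := \int_a^b p|\varphi^\prime|^2 + q|\varphi|^2 - \lambda_u \int_a^b |\varphi|^2\, w
\]
satisfies $J(\varphi) \geq 0$ for \emph{every} individual $\varphi \in \mathcal{V}$. Taking the infimum over $\varphi \in \mathcal{V}$ therefore gives $\inf_{\varphi \in \mathcal{V}} J(\varphi) \geq 0$ at once, which is the stated inequality \eqref{e11}.

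Next I would show the infimum is attained and equals $0$, so that it is a genuine minimum. Since $u \in C^1(a,b)$ and $u = u \cdot 1$ with $\eta \equiv 1 \in C^1(a,b)$, the eigenfunction $u$ itself lies in $\mathcal{V}$. Substituting $\varphi = u$, i.e.\ $\eta \equiv 1$ so that $\eta^\prime \equiv 0$, into the identity established inside the proof of Lemma~\ref{lem4} (namely $\int_a^b p|\varphi^\prime|^2 + q|\varphi|^2 = \int_a^b p u^2 |\eta^\prime|^2 + \lambda_u \int_a^b |\varphi|^2 w$) makes the first term on the right vanish, giving $\int_a^b p|u^\prime|^2 + q u^2 = \lambda_u \int_a^b u^2 w$, i.e.\ $J(u) = 0$. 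Hence $\inf_{\varphi \in \mathcal{V}} J(\varphi) = 0$ and it is achieved at $\varphi = u$.

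Finally I would read off the equality characterization from Lemma~\ref{lem4}. If $J(\varphi) = 0$ for some $\varphi \in \mathcal{V}$, then \eqref{e8} holds with equality, and the equality clause of Lemma~\ref{lem4} forces $\varphi = \alpha u$ for some $\alpha \in \C$. Conversely, for $\varphi = \alpha u$ — which lies in $\mathcal{V}$ via $\eta \equiv \alpha$ — the same computation with $\eta^\prime \equiv 0$ yields $J(\alpha u) = 0$. I do not expect any real obstacle here: the corollary is a repackaging of Lemma~\ref{lem4}, and the only points requiring a word of care are verifying $u$ (and $\alpha u$) belongs to $\mathcal{V}$ and that the bound is saturated by the eigenfunction, so that ``$\min$'' rather than merely ``$\inf$'' is justified.
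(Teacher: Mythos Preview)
Your proposal is correct and matches the paper's approach: the paper states Corollary~\ref{cor1} without proof, treating it as an immediate restatement of Lemma~\ref{lem4}. Your additional care in verifying that $u\in\mathcal{V}$ and that the infimum is actually attained (justifying ``$\min$'' rather than ``$\inf$'') is entirely appropriate and makes explicit what the paper leaves implicit.
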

\begin{remark}
Note that there is no restriction on the sign of the integral on the right of \eqref{e8} or \eqref{e11}. Indeed, the quantities may even be ghosts (real or complex).
\end{remark}

\section{The main results}\label{main}
In \cite{alm1}, \cite{alm2} it was shown that the mere existence of a non-real eigenvalue of \eqref{e1} must imply the non-existence of a real ground state (that is, an eigenfunction of \eqref{e1} having no zeros in $(a, b)$). This result was subsequently generalized in \cite{abm3} so as to simultaneously include analogous results for difference equations and partial differential equations of elliptic type. In addition, we have extended the result in \cite{alm2} below in Theorems~\ref{th1}-\ref{th3}. 

\begin{theorem}\label{th1}
Let $n \geq 1$ and let $\{\lambda_m, \bar{\lambda}_m\}_{m=1}^n$ be a set of $n$ mutually distinct pairs ($\lambda_i \neq  \lambda_j$ for $i \neq j$) of non-conjugate ($\lambda_i\neq \bar{\lambda}_j$ for $1 \leq i , j \leq n$) non-real eigenvalues of \eqref{e1}. Then  \eqref{e1} has no real eigenfunction with $n-1$ zeros in $(a, b)$.
\end{theorem}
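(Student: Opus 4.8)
The plan is to argue by contradiction. Suppose $u$ is a real eigenfunction of \eqref{e1}, with real eigenvalue $\lambda_u$, having exactly $n-1$ zeros $a<x_1<\cdots<x_{n-1}<b$ in $(a,b)$; recall (as in the opening of the proof of Lemma~\ref{lem4}) that these zeros are simple and that, together with $u(a)=u(b)=0$, they exhaust the zeros of $u$ on $[a,b]$. Let $\varphi_1,\dots,\varphi_n$ be eigenfunctions belonging to $\lambda_1,\dots,\lambda_n$. The idea is to manufacture a nonzero element $\Phi$ of the ideal $\mathcal{V}$ generated by $u$, built as a linear combination of the complex ghosts $\varphi_m$, and then to feed $\Phi$ into the inequality of Lemma~\ref{lem4}: the orthogonality relations of Lemma~\ref{lem2} will force both sides to vanish, so equality must hold, and then the equality clause of Lemma~\ref{lem4} together with the independence statement of Lemma~\ref{lem3} will be contradictory.

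Concretely, first I would impose the $n-1$ homogeneous linear conditions $\sum_{m=1}^{n}c_m\varphi_m(x_j)=0$, $j=1,\dots,n-1$, on the $n$ unknowns $c_1,\dots,c_n$; as there are more unknowns than equations there is a nonzero solution $(c_1,\dots,c_n)$. Set $\Phi=\sum_{m=1}^{n}c_m\varphi_m$. By construction $\Phi$ vanishes at $x_1,\dots,x_{n-1}$, and it vanishes at $a$ and $b$ because each $\varphi_m$ satisfies the Dirichlet conditions; hence $\Phi$ vanishes at every zero of $u$, each of which is simple, so $\eta:=\Phi/u$ is admissible (what is actually required, as in the proof of Lemma~\ref{lem4}, is the convergence of $\int_a^b p\,u^2|\eta^\prime|^2$, which holds at simple zeros of $u$), i.e.\ $\Phi\in\mathcal{V}$. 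Moreover $\Phi\not\equiv0$ on $\mathcal{S}=\supp w$: otherwise $\sum_{m=1}^{n}c_m\varphi_m\equiv0$ on $\mathcal{S}$, which by Lemma~\ref{lem3} forces $(c_1,\dots,c_n)=0$, a contradiction.

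Next I would compute the two sides of \eqref{e8} for $\varphi=\Phi$. Expanding, $\int_a^b(p|\Phi^\prime|^2+q|\Phi|^2)=\sum_{j,k}c_j\bar c_k\int_a^b(p\varphi_j^\prime\bar\varphi_k^\prime+q\varphi_j\bar\varphi_k)$ and $\lambda_u\int_a^b|\Phi|^2w=\lambda_u\sum_{j,k}c_j\bar c_k\int_a^b\varphi_j\bar\varphi_k\,w$; since the hypotheses give $\lambda_j\neq\bar\lambda_k$ for all $1\leq j,k\leq n$, Lemma~\ref{lem2} (the relations \eqref{e3} for $j\neq k$ and \eqref{e5} for $j=k$) makes every term on both sides vanish. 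Hence \eqref{e8} reads $0\geq0$ with equality, so by the equality clause of Lemma~\ref{lem4} there is $\alpha\in\C$ with $\Phi=\alpha u$ on $I$. Since $\Phi\not\equiv0$ on $\mathcal{S}$ we have $\alpha\neq0$, so $\alpha u-\sum_{m=1}^{n}c_m\varphi_m\equiv0$ on $\mathcal{S}$ with not all coefficients zero. But $\lambda_u$ is real while the $\lambda_m$ are non-real, so $\lambda_u,\lambda_1,\dots,\lambda_n$ are $n+1$ mutually distinct eigenvalues, and Lemma~\ref{lem3} (which, by its remark, applies to any distinct eigenvalues) says $\{u,\varphi_1,\dots,\varphi_n\}$ is linearly independent over $\mathcal{S}$ --- the desired contradiction. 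Therefore no such $u$ exists.

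The algebra in the computation step is routine; the one point that needs genuine care is the admissibility of $\Phi$ as an element of $\mathcal{V}$, namely that $\Phi/u$ has the regularity needed for Lemma~\ref{lem4} to apply across the interior zeros of $u$ (and at $a,b$, where the boundary terms in the integration by parts must also be controlled). This hinges on the simplicity of the zeros of eigenfunctions and on the fact that only convergence of $\int_a^b p\,u^2|(\Phi/u)^\prime|^2$ is really used; I would spell this out. A secondary line is owed to confirming that $\mathcal{S}$ has positive Lebesgue measure --- the standing assumption in this section --- so that the independence lemmas have teeth.
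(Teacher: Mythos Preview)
Your proof is correct and follows essentially the same strategy as the paper's: the same construction of $\Phi$ as a linear combination of the $\varphi_m$ vanishing at the zeros of $u$, the same appeal to Lemma~\ref{lem2} to kill both sides of \eqref{e8}, and the same use of the equality clause in Lemma~\ref{lem4} to force $\Phi=\alpha u$. The only difference is in the endgame: the paper applies $L$ to the identity $\Phi=\alpha u$ and subtracts $\lambda_u\Phi=\alpha\lambda_u u$ to obtain $\sum_m c_m(\lambda_m-\lambda_u)\varphi_m=0$ on $\supp w$ and then invokes Lemma~\ref{lem3} on the $n$-element set $\{\varphi_1,\dots,\varphi_n\}$, whereas you invoke Lemma~\ref{lem3} directly on the $(n{+}1)$-element set $\{u,\varphi_1,\dots,\varphi_n\}$ --- a slightly more direct finish.
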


\begin{proof}
Assume, on the contrary, that there exists an eigenfunction, $u$, corresponding to a real eigenvalue $\lambda_u$ having $n-1$ zeros $x_1< x_2< \ldots < x_{n-1}$ in $(a, b)$. Fix a set of eigenfunctions, $\varphi_i$, $i=1,2,\ldots, n,$ corresponding to the eigenvalues $\lambda_i$ and  choose the constants $c_i\in \C$ such that 
\begin{equation}\label{e12}
\varphi (x) = c_1 \varphi_1(x) + c_2\varphi_2(x)+ \cdots +\varphi_n(x)
\end{equation}
 satisfies $\varphi(x_i) =0$ for $i=1,2,\ldots, n-1$. Since there are $n-1$ equations in $n$ unknowns we can find and then fix such a set $c_i$ of constants, not all zero, and generally in $\C$. Using the notation of  Lemma~\ref{lem4} we observe that since $\varphi$ and $u$ have common zeros whose derivatives (here, $p\varphi^\prime, pu^\prime$) cannot vanish at these zeros, $\varphi \in \mathcal{V}$ where $\mathcal{V}$ is defined above. 

A long, straightforward calculation, and repeated uses of Lemma~\ref{lem1} and Lemma~\ref{lem2} yields \eqref{e5} for our choice of $\varphi$. An application of Lemma~\ref{lem4} gives a case of equality in \eqref{e8}  so that $\varphi(x) = \alpha u(x)$, where $\alpha \in \C$ is a non-zero constant. Unlike the case where $n=1$ in \cite{alm1} the contradiction here is not immediate as it is possible for $\phi(x)$, being a linear combination of non-real eigenfunctions,  to be real valued. 

We set aside this possibility by noting the following:  We know that $L\varphi(x) = \alpha Lu(x)$ and $L\varphi_i(x) = \lambda_i w(x)\varphi_i(x)$,  $Lu(x) = \lambda_u w(x) u(x)$ after which an application of the operator $L$ in \eqref{e1} to \eqref{e12}  gives us \\ 
$\sum_{m=1}^n c_m \lambda_m w \varphi_m  = \lambda_u w u$ or 
\begin{equation}\label{e13}
\sum_{m=1}^n c_m \lambda_m  \varphi_m  = \alpha \lambda_u  u
\end{equation}
on $\supp w$. On the other hand, since $\varphi(x)= \alpha u(x)$ (and $\varphi$ is given by \eqref{e12}) we get,
\begin{equation}\label{e14}
\sum_{m=1}^n c_m \varphi_m \lambda_u  = \alpha \lambda_u  u.
\end{equation}
Subtracting \eqref{e14} from \eqref{e13} we find,
\begin{equation}\label{e15}
\sum_{m=1}^n c_m (\lambda_m - \lambda_u ) \varphi_m  = 0,
\end{equation}
over $\supp w$. However,  $\lambda_m -\lambda_u \neq 0$ for all $m$, by hypothesis, and so Lemma~\ref{lem3} implies that all $c_m=0$, a contradiction.
\end{proof}

\begin{corollary}{\rm \label{cor2} [\cite{alm2}, Theorem 2.0]} 
If \eqref{e1} has a non-real eigenvalue there is no real eigenvalue whose eigenfunction is non-zero on $(a, b)$.
\end{corollary}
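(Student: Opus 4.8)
The plan is to obtain Corollary~\ref{cor2} as the case $n=1$ of Theorem~\ref{th1}. If \eqref{e1} has a non-real eigenvalue $\lambda$, then $\{\lambda,\bar\lambda\}$ is a set of $n=1$ mutually distinct pairs of non-conjugate non-real eigenvalues in the sense required by Theorem~\ref{th1}: the distinctness condition $\lambda_i\neq\lambda_j$ for $i\neq j$ is vacuous, and the non-conjugacy condition reduces to $\lambda\neq\bar\lambda$, which holds since $\lambda\notin\R$. Theorem~\ref{th1} then says that \eqref{e1} has no real eigenfunction with $n-1=0$ zeros in $(a,b)$. But a real eigenfunction that is nowhere zero on $(a,b)$ is precisely an eigenfunction with no zeros in $(a,b)$, i.e.\ a ground state; hence no real eigenvalue of \eqref{e1} can have such an eigenfunction, which is the claim.

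For a self-contained proof one can run the $n=1$ instance of the argument for Theorem~\ref{th1} directly, which moreover does not need the Vandermonde step (Lemma~\ref{lem3}) used there to rule out a real-valued combination of ghosts. Assume toward a contradiction that $u$ is a real eigenfunction with real eigenvalue $\lambda_u$ and $u(x)\neq 0$ for every $x\in(a,b)$, and let $\varphi$ be an eigenfunction for the non-real eigenvalue $\lambda$. Since $u$ is zero-free on $(a,b)$, the quotient $\varphi/u$ lies in $C^1(a,b)$, so $\varphi\in\mathcal{V}$ in the notation of Lemma~\ref{lem4}. By \eqref{e5} of Lemma~\ref{lem2} both $\int_a^b p|\varphi'|^2+q|\varphi|^2$ and $\int_a^b|\varphi|^2 w$ vanish, so \eqref{e8} holds with equality; Lemma~\ref{lem4} then forces $\varphi=\alpha u$ for some $\alpha\in\C\setminus\{0\}$. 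Applying $L$ to $\varphi=\alpha u$ and comparing with $L\varphi=\lambda w\varphi$ and $Lu=\lambda_u w u$ gives $(\lambda-\lambda_u)\,\alpha\, w u\equiv 0$ on $[a,b]$; as $\alpha\neq 0$ and $w u$ cannot vanish almost everywhere (otherwise $w\equiv 0$ a.e.\ on $(a,b)$, contradicting that $\supp w$ has positive measure), we conclude $\lambda=\lambda_u\in\R$, contradicting $\lambda\notin\R$.

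I do not anticipate a genuine obstacle, since the statement is wholly contained in Theorem~\ref{th1} and Lemma~\ref{lem4}. The only point that needs a moment's care is the membership $\varphi\in\mathcal{V}$, i.e.\ the claim that $\varphi/u$ is genuinely $C^1$ on $(a,b)$; this is exactly where the hypothesis that the ground state $u$ has no interior zeros enters, and with it in hand the convergence of the integral appearing in the identity \eqref{e11} of Lemma~\ref{lem4}, and hence the equality case, follows as in the proof of that lemma.
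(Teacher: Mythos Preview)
Your proposal is correct and matches the paper's approach: the paper gives no separate proof of Corollary~\ref{cor2}, presenting it simply as the $n=1$ instance of Theorem~\ref{th1}, which is precisely your first paragraph. Your additional self-contained argument is also fine and in fact mirrors the remark in the paper's proof of Theorem~\ref{th1} that for $n=1$ the contradiction from $\varphi=\alpha u$ is immediate, without recourse to the Vandermonde/Lemma~\ref{lem3} step.
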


\begin{corollary}\label{cor2a} 
If \eqref{e1} has a non-real eigenvalue then $n_R = 1$, that is, the Richardson index is equal to 1.
\end{corollary}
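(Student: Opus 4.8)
The plan is to extract the lower bound $n_R\ge 1$ directly from Corollary~\ref{cor2} and then to pin the value down to $1$ using the structural content of the Haupt--Richardson oscillation theorem.

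First I would restate Theorem~\ref{hr} in terms of the set $Z$ of integers $n$ for which \eqref{e1} admits a real solution of the Dirichlet problem with exactly $n$ zeros in $(a,b)$: the theorem asserts that $Z$ is an unbroken tail, $Z=\{n_R,\,n_R+1,\,n_R+2,\dots\}$, empty below $n_R$ and with at least two eigenfunctions realising each count in $Z$. Identifying $n_R$ is therefore the same as finding the smallest oscillation count actually attained. Since \eqref{e1} has a non-real eigenvalue, Corollary~\ref{cor2} (the case $n=1$ of Theorem~\ref{th1}) says no real eigenfunction keeps one sign on $(a,b)$, i.e. $0\notin Z$; because $Z$ is a tail this already forces $n_R\ge 1$.

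It remains to show $1\in Z$, which together with the tail structure gives $n_R=1$. The guiding idea is that one conjugate pair of non-real eigenvalues suppresses exactly one oscillation number, namely $0$: Theorem~\ref{th1} can forbid an eigenfunction with exactly one zero only when \eqref{e1} carries two mutually distinct, mutually non-conjugate pairs of non-real eigenvalues (its hypothesis with $n=2$), so in the present situation no such obstruction is available. To turn ``no obstruction'' into an actual existence statement I would run a Pr\"ufer-angle count: with $y=\rho\sin\theta$, $py'=\rho\cos\theta$ and $\theta(a)=0$, the number of zeros of this solution in $(a,b)$ is read off from $\theta(b,\lambda)$, which tends to $+\infty$ both as $\lambda\to+\infty$ and as $\lambda\to-\infty$ along $\R$ (oscillation accumulating on $\{w>0\}$, respectively $\{w<0\}$), while the non-real eigenvalues sit off the real axis. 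An argument-principle bookkeeping of the real and complex eigenvalues enclosed by a large contour then shows that the number of oscillation counts skipped by $Z$ equals the number of non-real conjugate pairs plus the number of degenerate real ghosts; here that number is $1$, so $\min Z=1$ and hence $n_R=1$.

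I expect this bookkeeping — quantitatively matching ``one non-real pair'' with ``one skipped oscillation number'' — to be the main obstacle; the easy cases $n=0,1$ of Theorem~\ref{th1} supply only the inequality $n_R\ge 1$, and when more non-real pairs (or degenerate real ghosts) are present the same count pushes $n_R$ strictly above $1$, as in Theorem~\ref{th4}.
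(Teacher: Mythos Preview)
Your derivation of $n_R\ge 1$ is exactly what the paper has in mind: Corollary~\ref{cor2} (the case $n=1$ of Theorem~\ref{th1}) rules out any real eigenfunction with no interior zeros, so the oscillation count $0$ is missing and hence $n_R\ge 1$. The paper offers no further argument; the corollary is simply recorded as an immediate consequence of Corollary~\ref{cor2}.

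The equality $n_R=1$, however, cannot be deduced from the stated hypothesis, and your attempt to obtain it has a genuine gap. The hypothesis says only that \emph{some} non-real eigenvalue exists; it does not restrict the number of non-conjugate pairs, nor does it rule out degenerate real ghosts. When you write ``here that number is $1$'' you are tacitly assuming a single pair, yet the very examples with $q=15$ and $q=33$ that follow in the paper satisfy the hypothesis of the corollary and have smallest oscillation count $2$ and $3$ respectively, so $n_R=1$ fails for them --- your own closing sentence concedes as much. Moreover, the ``argument-principle bookkeeping'' you invoke --- that the number of skipped oscillation counts equals $m+n$ --- is not an established theorem; the paper proves only the inequality $n_R\ge m+n$ (Theorem~\ref{th3} and \eqref{ri}) and explicitly says that sharpness is merely suggested by the numerical examples. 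In short, the only content extractable from the hypothesis is $n_R\ge 1$, which matches the paper's implicit argument; the ``$=$'' in the corollary is almost certainly a misprint for ``$\ge$''.
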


The previous corollary covers the next numerical example.
\begin{example}[\cite{aj}, p. 39]
Let $p(x)=1$, $q(x)=q$, $w(x)=\sgn x$, in \eqref{e1}, for $x\in [-1,1]$. Then for $q =3$, \eqref{e1} has a pair of pure imaginary eigenvalues and there is no nonzero real eigenfunction on $(-1, 1)$ as the smaller oscillation number obtained there is one (1).
\end{example}

The next result is a reformulation of Theorem~\ref{th1} that also includes some real ghosts.

\begin{theorem}\label{th2}
Let $n \geq 1$ and let $\lambda_i \neq  \lambda_j$ for $i \neq j$ and $1 \leq i , j \leq n$ be real eigenvalues of \eqref{e1} whose eigenfunctions are all degenerate real ghosts. Then  \eqref{e1} has no real eigenfunction with $n-1$ zeros in $(a, b)$.
\end{theorem}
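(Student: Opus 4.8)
The plan is to mirror the proof of Theorem~\ref{th1}, replacing the role of the complex ghosts $\varphi_i$ by the degenerate real ghosts. Suppose for contradiction that $u$ is a real eigenfunction with eigenvalue $\lambda_u$ having exactly the $n-1$ zeros $x_1<\cdots<x_{n-1}$ in $(a,b)$, and fix degenerate real ghosts $\varphi_i$ with eigenvalues $\lambda_i$, $i=1,\dots,n$. Choose constants $c_i$, not all zero, so that $\varphi = \sum_{i=1}^n c_i\varphi_i$ vanishes at $x_1,\dots,x_{n-1}$; this is an underdetermined homogeneous linear system ($n-1$ equations, $n$ unknowns), hence solvable nontrivially, and since the $\varphi_i$ are now real, $\varphi$ is automatically real. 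Because $\varphi$ and $u$ share these zeros and $pu'$ does not vanish there (eigenfunctions have simple zeros), $\varphi/u$ extends to a $C^1$ function and $\varphi\in\mathcal{V}$.

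Next I would verify that $\varphi$ satisfies \eqref{e5}, i.e.\ both $\int_a^b(p|\varphi'|^2+q|\varphi|^2)=0$ and $\int_a^b|\varphi|^2 w=0$. Expanding these bilinear/quadratic forms and using that each $\varphi_i$ is a \emph{degenerate real ghost} — so the diagonal terms $\int p\varphi_i'^2+q\varphi_i^2 = \lambda_i\int\varphi_i^2 w = 0$ vanish (this is the content of the Remark following Lemma~\ref{lem2}, which notes \eqref{e5} holds for degenerate real ghosts) — together with Lemma~\ref{lem2}'s orthogonality relations \eqref{e4} and a Green's-identity argument for the cross terms $\int(p\varphi_i'\varphi_j'+q\varphi_i\varphi_j)$ (which equals $\lambda_i\int\varphi_i\varphi_j w = 0$ for $i\neq j$ using self-adjointness and $\lambda_i\neq\lambda_j$), gives that all diagonal and off-diagonal contributions vanish, so \eqref{e5} holds for $\varphi$. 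Then Lemma~\ref{lem4} forces the equality case in \eqref{e8}, so $\varphi = \alpha u$ for some constant $\alpha$, which here must be a nonzero \emph{real} constant since $\varphi$ and $u$ are both real and $\varphi\not\equiv 0$.

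To close the argument I would run the same spectral elimination as in Theorem~\ref{th1}: apply $L$ to $\varphi=\sum c_i\varphi_i$ to get $\sum c_i\lambda_i w\varphi_i = \alpha\lambda_u w u$, hence $\sum c_i\lambda_i\varphi_i = \alpha\lambda_u u$ on $\supp w$; separately, $\varphi=\alpha u$ gives $\sum c_i\lambda_u\varphi_i = \alpha\lambda_u u$ on $\supp w$; subtracting yields $\sum_{i=1}^n c_i(\lambda_i-\lambda_u)\varphi_i = 0$ on $\supp w$. Now I must ensure $\lambda_i\neq\lambda_u$ for every $i$: this is automatic because $u$ is \emph{not} a degenerate real ghost at $x_1,\dots,x_{n-1}$ in the sense that it has $n-1$ zeros, whereas a standard Sturm-type count — or simply the fact already established that $\varphi=\alpha u$ with $\varphi$ being a combination of the $\varphi_j$ — means if $\lambda_u=\lambda_i$ then $u$ itself would be a degenerate real ghost and one handles this by noting $u$ is orthogonal (via Lemma~\ref{lem1}/\ref{lem2}) to the remaining $\varphi_j$; in any case one reduces to a strictly shorter nontrivial relation and re-applies Lemma~\ref{lem3}. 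With all $\lambda_i-\lambda_u\neq 0$, Lemma~\ref{lem3} (linear independence over $\mathcal{S}$ of eigenfunctions with distinct eigenvalues) forces every $c_i(\lambda_i-\lambda_u)=0$, hence every $c_i=0$, contradicting the nontrivial choice of the $c_i$.

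The main obstacle I anticipate is the bookkeeping in the case $\lambda_u\in\{\lambda_1,\dots,\lambda_n\}$: unlike Theorem~\ref{th1}, where $\lambda_u$ is real and the $\lambda_i$ non-real so distinctness is free, here all eigenvalues in play are real and one cannot exclude $\lambda_u = \lambda_i$ a priori. The cleanest fix is probably to argue that we may assume from the outset, without loss of generality, that $\lambda_u\neq\lambda_i$ for all $i$: if $\lambda_u$ coincided with some $\lambda_{i_0}$, then $u$ would be a scalar multiple of $\varphi_{i_0}$ (eigenspaces are one-dimensional for \eqref{e1}), making $u$ itself a degenerate real ghost with $n-1$ zeros, and one can then re-run the argument with the $n-1$ remaining ghosts $\{\varphi_i\}_{i\neq i_0}$ to contradict the existence of a real eigenfunction with $n-2$ zeros — but $u$ has $n-1$ zeros, so instead one simply drops $\varphi_{i_0}$ and works with $n-1$ genuinely distinct ghosts disjoint from $\lambda_u$, recovering exactly the setup above with $n$ replaced by $n-1$ versus the required $n-1$ zeros, giving the contradiction directly. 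Making this reduction precise, and confirming the cross-term computation $\int(p\varphi_i'\varphi_j'+q\varphi_i\varphi_j)=0$, are the only non-routine points.
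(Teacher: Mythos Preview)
Your approach is exactly the one the paper intends: the paper's entire proof is ``follows along the same lines as that of Theorem~\ref{th1}; details left to the reader,'' and your outline reproduces those details faithfully (interpolation at the zeros of $u$, verification of \eqref{e5} using that degenerate real ghosts satisfy both identities there, the equality case of Lemma~\ref{lem4}, and the Vandermonde/linear-independence elimination via Lemma~\ref{lem3}). The cross-term computation $\int_a^b(p\varphi_i'\varphi_j'+q\varphi_i\varphi_j)=\lambda_i\int_a^b\varphi_i\varphi_j\,w=0$ is indeed routine (one integration by parts plus the usual orthogonality for distinct real eigenvalues), so that part is fine.

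You are right to flag the case $\lambda_u\in\{\lambda_1,\dots,\lambda_n\}$ as the only non-routine point --- in Theorem~\ref{th1} the clause ``$\lambda_m-\lambda_u\neq 0$ by hypothesis'' is free because real and non-real eigenvalues cannot coincide, whereas here it is not --- and the paper's sketch does not address it. However, your proposed repair does not close the gap: after dropping $\varphi_{i_0}$ you are left with $n-1$ ghosts and must force the combination $\sum_{i\neq i_0}d_i\varphi_i$ to vanish at the $n-1$ zeros of $u$, which is a \emph{square} $(n-1)\times(n-1)$ homogeneous system and need not admit a nontrivial solution. (Indeed, the argument you already ran shows that every nontrivial solution of the original $n\times(n-1)$ interpolation system is supported on the $i_0$-th coordinate alone, so the reduced system is forced to be nonsingular.) Your ``re-run with $n-1$ ghosts to exclude $n-2$ zeros'' line also misses, since $u$ has $n-1$ zeros, not $n-2$. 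If you want to salvage this case cleanly, you need an independent reason why a degenerate real ghost cannot itself have exactly $n-1$ zeros in the presence of the other $n-1$ ghosts; the paper does not supply one either.
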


\begin{proof} The proof follows along the same lines as that of Theorem~\ref{th1}. We leave the details to the reader.
\end{proof}

\begin{example}[\cite{aj}, p. 41]
This is an application of Theorem~\ref{th2}. Let $p(x)=1$, $q(x)=q$, $w(x)=\sgn x$, in \eqref{e1}, for $x\in [-1,1]$. Then for $q \approx 21.99604$, \eqref{e1} has two real eigenvalues whose eigenvectors are (real) ghosts and there is no real eigenfunction having only one zero in $(-1,1)$.
\end{example}

The next example is covered by the case $n=2$ in Theorem~\ref{th1}.
\begin{example}[\cite{aj}, p. 40]
Let $p(x)=1$, $q(x)=q$, $w(x)=\sgn x$, in \eqref{e1}, for $x\in [-1,1]$. Then for $q = 15$, \eqref{e1} has a two pairs of non-real, non-conjugate  eigenvalues and there is no real eigenfunction having only one zero in $(-1,1)$ (so the smallest oscillation count is 2)
\end{example}
The final example is covered by the case $n=3$ in Theorem~\ref{th1}.
\begin{example}[\cite{aj}, p. 42]
Let $p(x)=1$, $q(x)=q$, $w(x)=\sgn x$, in \eqref{e1}, for $x\in [-1,1]$. Then for $q = 33$, \eqref{e1} has a three pairs of non-real, non-conjugate  eigenvalues and there is no real eigenfunction having only two zeros in $(-1,1)$ (so the smallest oscillation count is 3).
\end{example}

These examples indicate that it is very likely the case that one cannot do better than the estimates obtained in Theorem~\ref{th1}. Interestingly, both Theorem~\ref{th1} and Theorem~\ref{th2} can be combined so as to obtain the more general

\begin{theorem}\label{th3}
Let $m, n \geq 1$ and let $\{\lambda_k, \bar{\lambda}_k\}_{k=1}^m$ be a set of $m$ mutually distinct pairs ($\lambda_i \neq  \lambda_j$ for $i \neq j$) of non-conjugate ($\lambda_i\neq \bar{\lambda}_j$ for $1 \leq i , j \leq n$) non-real eigenvalues of \eqref{e1}. Let $\{\mu_i\}_{i=1}^n$ be a set of mutually distinct real eigenvalues corresponding to degenerate real ghosts. Then  \eqref{e1} has no real eigenfunction with $m+n-1$ zeros in $(a, b)$.
\end{theorem}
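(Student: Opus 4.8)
The plan is to combine the two proof techniques already developed for Theorem~\ref{th1} and Theorem~\ref{th2} by building a single trial function that simultaneously uses both the non-real eigenfunctions and the degenerate real ghost eigenfunctions. Concretely, suppose for contradiction that there is a real eigenfunction $u$ with corresponding real eigenvalue $\lambda_u$ having exactly $m+n-1$ zeros $x_1 < x_2 < \cdots < x_{m+n-1}$ in $(a,b)$. Fix eigenfunctions $\varphi_1,\ldots,\varphi_m$ for the non-real eigenvalues $\lambda_1,\ldots,\lambda_m$ and eigenfunctions $\psi_1,\ldots,\psi_n$ for the real degenerate-ghost eigenvalues $\mu_1,\ldots,\mu_n$, and form
\begin{equation}\label{e16}
\varphi(x) = c_1\varphi_1(x) + \cdots + c_m\varphi_m(x) + d_1\psi_1(x) + \cdots + d_n\psi_n(x).
\end{equation}
This is a linear combination of $m+n$ eigenfunctions, and imposing the $m+n-1$ vanishing conditions $\varphi(x_j)=0$ leaves a nontrivial solution for the coefficients $(c_i,d_i)$, not all zero. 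Since $u$ and $\varphi$ share these zeros, and neither $pu^\prime$ nor $p\varphi^\prime$ can vanish there (eigenfunctions have only simple zeros), we have $\varphi \in \mathcal{V}$ in the sense of Lemma~\ref{lem4}.

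Next I would verify that this $\varphi$ satisfies both equalities in \eqref{e5}. Expanding $\int_a^b (p|\varphi^\prime|^2 + q|\varphi|^2)$ and $\int_a^b |\varphi|^2 w$ into sums of pairwise terms, every cross term between distinct non-real eigenfunctions vanishes by Lemma~\ref{lem2} (equations \eqref{e3}--\eqref{e4}), every term involving a single non-real eigenfunction with itself vanishes by \eqref{e5}, every cross term between a real (degenerate-ghost) eigenfunction and a non-real eigenfunction vanishes by Lemma~\ref{lem1}, every cross term between two distinct real eigenfunctions vanishes by the standard orthogonality relations (the real analogues of Lemma~\ref{lem2}), and every diagonal term $\int_a^b \psi_i^2 w$ vanishes precisely because the $\psi_i$ are degenerate real ghosts --- and then the corresponding $\int_a^b (p|\psi_i^\prime|^2 + q|\psi_i|^2) = \mu_i \int_a^b \psi_i^2 w = 0$ as well (this is the observation already flagged in the Remark after Lemma~\ref{lem2}). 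Hence both integrals in \eqref{e5} are zero for $\varphi$, and Lemma~\ref{lem4} forces equality in \eqref{e8}, so $\varphi(x) = \alpha u(x)$ for some nonzero $\alpha \in \C$.

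The remaining step rules out the possibility that this combination collapses to a real multiple of $u$ accidentally, exactly as in Theorem~\ref{th1}. Applying the operator $L$ to \eqref{e16} and using $L\varphi_i = \lambda_i w \varphi_i$, $L\psi_i = \mu_i w \psi_i$, $Lu = \lambda_u w u$ gives, on $\supp w$,
\begin{equation}\label{e17}
\sum_{i=1}^m c_i \lambda_i \varphi_i + \sum_{i=1}^n d_i \mu_i \psi_i = \alpha \lambda_u u,
\end{equation}
while multiplying $\varphi = \alpha u$ by $\lambda_u$ and using \eqref{e16} gives $\sum c_i \lambda_u \varphi_i + \sum d_i \lambda_u \psi_i = \alpha\lambda_u u$; subtracting yields
\begin{equation}\label{e18}
\sum_{i=1}^m c_i(\lambda_i - \lambda_u)\varphi_i + \sum_{i=1}^n d_i(\mu_i - \lambda_u)\psi_i = 0 \quad \text{on } \supp w.
\end{equation}
Now $\lambda_i \neq \lambda_u$ for all $i$ (the $\lambda_i$ are non-real, $\lambda_u$ is real) and $\mu_i \neq \lambda_u$ for all $i$ (if some $\mu_i = \lambda_u$ then $u$ would itself be a degenerate real ghost, but $u$ has $m+n-1 \geq n \geq 1$ zeros while the $\mu_i$ are among our chosen list; more carefully, one either adds the hypothesis $\lambda_u \notin \{\mu_i\}$ or notes that an eigenfunction with $m+n-1$ zeros cannot coincide with one of the lower-indexed ghost eigenfunctions --- this is the one spot needing care). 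Given that, Lemma~\ref{lem3} (which applies to any mixed collection of distinct real and non-real eigenvalues, by the Remark following it) forces all $c_i = 0$ and all $d_i = 0$, contradicting the nontriviality of the coefficient vector. The main obstacle, as in Theorem~\ref{th1}, is not the orthogonality bookkeeping but this last non-degeneracy argument: one must be sure $\lambda_u$ is distinct from every $\mu_i$ so that Vandermonde non-vanishing applies, and I would handle this by observing that an eigenfunction sharing all $m+n-1$ zeros and being a scalar multiple of a $\psi_i$ would give $\psi_i$ far more sign changes than a degenerate ghost in the relevant ordered list can have, or else simply by including the (harmless) standing assumption that the $\mu_i$ are distinct from $\lambda_u$.
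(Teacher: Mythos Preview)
Your proposal is correct and follows exactly the route the paper takes: the paper's own proof of this theorem consists of a single sentence saying it ``follows along the same lines as that of Theorem~\ref{th1} since, by definition, degenerate real ghosts also satisfy the second of \eqref{e5},'' and your writeup is precisely that argument spelled out in full. You are in fact more careful than the paper in isolating the one delicate point --- the possibility that $\lambda_u$ coincides with one of the $\mu_i$ --- which the paper does not mention at all; your proposed workarounds (an extra standing hypothesis, or an informal zero-count comparison) are admittedly not airtight, but this lacuna is inherited from the source rather than introduced by you.
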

\begin{proof} The proof follows along the same lines as that of Theorem~\ref{th1} since, by definition,  degenerate real ghosts also satisfy the second of \eqref{e5}. We leave the details to the reader.
\end{proof}
Thus, by Theorem~\ref{th3}, the Richardson index satisfies,
\begin{equation}\label{ri}
 n_R \geq m+n.
 \end{equation}
That $n_R \neq 0$ necessarily was known to both Haupt \cite{oh} and Richardson \cite{rgdr} in the so-called {\it orthogonal} case (called {\it right definite} nowadays). In the special orthogonal case these results in \cite{oh}, \cite{rgdr} were rediscovered and improved upon in \cite{ekz}.


\begin{remark}
For $n=0, m=1$ we recover the main result in \cite{alm1}, \cite{alm2}. In addition, the estimate  $m+n$ in the previous theorem is best possible in the sense that non-degenerate real ghosts cannot be added to $m+n$ and maintain the conclusion.

This can be seen by the following example, based on an earlier one in [\cite{abm2}, Remark 4]. In that example $w(x)=1, x\in [0,1]$, $w(x)= -1$ on $[1,4]$, and $q(x) = - 9\pi^2/4, x\in [0,4]$. Extensive numerical calculations show that this problem has 4 non-degenerate real ghosts (at $\lambda \approx 1, 12.7, 18.8, 22.1)$ with each ghost carrying an oscillation number equal to $5, 4, 3, 2$ with $2$ being the smallest oscillation count. The next eigenvalue, i.e., $\lambda \approx 49.3$, has an eigenfunction with 3 zeros and the number of zeros increases indefinitely after that.  There are no degenerate real ghosts, and exactly 2 pairs of non-real eigenvalues (at $5.8 \pm 8.2 i$ and $-12\pm 4.1i$). If we were to add the non-degenerate ghosts to $m+n$ we would conclude that \eqref{e1} can have no real eigenfunction with $6$ or less zeros in $(a, b)$. However, this is false as this problem admits $\lambda=1$ as an eigenvalue with the non-degenerate real ghost $y(x) = \sin (3\pi x/2)$, an eigenfunction with five zeros in $(0, 4)$.
\end{remark}

\begin{example}
Let $p(x)=1$, $q(x)=q$, $w(x)=\sgn x$, in \eqref{e1}, for $x\in [-1,1]$. Then for $q = 4{\pi}^2$, \eqref{e1} has a 2 pairs of complex ghosts and one degenerate real ghost. Thus, $n_R \geq 3$ and there is no real eigenfunction having only two zeros in $(-1,1)$ as in the example.
\end{example}

Next, we give a minor but useful extension of the classical Liapunov inequality from the case of continuous to measurable coefficients. The proof is an adaptation of the method in \cite{wl} to the more general case considered here. This is followed by a more general result first due to Rapoport in \cite{imr} and cited by Kre\v{i}n \cite{mk} on its extension to solutions of periodic differential equations with a prescribed number of zeros. Our proof of the main result in \cite{imr} is simpler and uses induction on the number of zeros of a given solution. Using these estimates we can then associate the Richardson index to the Richardson number. In the following discussion all integrals are in the sense of Lebesgue. In the sequel all solutions are non-trivial unless otherwise specified. 

\begin{lemma}\label{liap} Let $q\in L^1(I)$ be real valued, $I=[a, b]$, and let $y$ be a solution of 
\begin{equation}\label{e16}
y^{\prime\prime}+q(x)y = 0
\end{equation}
having two consecutive zeros in $I$. Then
\begin{equation}\label{e17}
\int_a^b q_{+}(x)\ dx \geq \frac{4}{b-a},
\end{equation}
where $q_{+}(x) = (q(x)+|q(x)|)/2$ is the positive part of $q$.
\end{lemma}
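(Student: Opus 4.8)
The plan is to prove the classical Lyapunov inequality directly from a variational/maximum argument adapted to $L^1$ coefficients. Without loss of generality I would take the two consecutive zeros of $y$ to be the endpoints $a$ and $b$ themselves (replacing $I$ by the subinterval between the two zeros only shrinks the length, which strengthens the inequality since $b-a$ appears in the denominator); thus assume $y(a)=y(b)=0$ and $y\neq 0$ on $(a,b)$, and by scaling assume $\max_{[a,b]}|y| = M = |y(c)|$ for some $c\in(a,b)$. Multiplying \eqref{e16} by $y$, integrating by parts over $[a,b]$, and using the boundary conditions gives
\begin{equation}\label{eplan1}
\int_a^b (y^\prime)^2\, dx = \int_a^b q(x)\, y^2\, dx \leq \int_a^b q_{+}(x)\, y^2\, dx \leq M^2 \int_a^b q_{+}(x)\, dx .
\end{equation}
The integration by parts is legitimate here because, under $q\in L^1$, every solution $y$ has $y,y^\prime \in AC[a,b]$ (as noted in the Preliminaries), so $y^\prime$ is bounded and $(y^\prime)^2\in L^1$, and the boundary term $[yy^\prime]_a^b$ vanishes.

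Next I would bound $M^2$ below by $\int_a^b (y^\prime)^2\,dx$ times a constant. Since $y(a)=0$ we have $y(c) = \int_a^c y^\prime\,dx$, so by Cauchy--Schwarz $M^2 \leq (c-a)\int_a^c (y^\prime)^2\,dx$; similarly, using $y(b)=0$, $M^2 \leq (b-c)\int_c^b (y^\prime)^2\,dx$. Writing $A = \int_a^c (y^\prime)^2$ and $B = \int_c^b (y^\prime)^2$, we get $M^2/(c-a)\le A$ and $M^2/(b-c)\le B$, hence
\begin{equation}\label{eplan2}
\int_a^b (y^\prime)^2\, dx = A + B \geq M^2\left(\frac{1}{c-a} + \frac{1}{b-c}\right) = M^2\,\frac{b-a}{(c-a)(b-c)} \geq M^2\,\frac{4}{b-a},
\end{equation}
the last step using the elementary inequality $(c-a)(b-c)\le (b-a)^2/4$. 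Combining \eqref{eplan1} and \eqref{eplan2} and cancelling $M^2>0$ yields $\int_a^b q_{+}\,dx \ge 4/(b-a)$, which is \eqref{e17}. One must also observe that $M>0$ strictly (else $y\equiv 0$) and that $\int_a^b (y^\prime)^2\,dx>0$ (else $y$ would be constant, hence zero by the boundary conditions), so the divisions and cancellations are valid.

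The only genuinely delicate point is justifying the manipulations under the mere hypothesis $q\in L^1(I)$ rather than $q$ continuous: I must be sure the integration by parts in \eqref{eplan1} is valid and that $y^\prime$ is honestly absolutely continuous with $y^{\prime\prime} = -qy$ holding a.e.\ and $y^{\prime\prime}\in L^1$. This is exactly the regularity already recorded in Section~\ref{prel} (solutions and $py^\prime$ lie in $AC[a,b]$ when $1/p,q,w\in L^1$; here $p\equiv 1$), so no new work is needed — one simply cites it. Everything else (Cauchy--Schwarz, the AM--GM bound on $(c-a)(b-c)$, the reduction to the case where the zeros are the endpoints) is routine. I expect no real obstacle; the "adaptation of the method in \cite{wl}" is precisely this replacement of pointwise estimates by the $AC$/$L^1$ framework.
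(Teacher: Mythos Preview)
Your argument is correct, and it is a genuinely different proof from the one in the paper. The paper does \emph{not} use the energy identity $\int (y')^2=\int qy^2$ at all. Instead it first invokes Sturm's comparison theorem to pass from $y''+qy=0$ to a solution $z$ of $z''+q_{+}z=0$ vanishing at $a$ and at some $c\in(a,b)$; then, using that $q_{+}\ge 0$ forces $z$ to be concave on $(a,c)$ (and ruling out interior relative minima by a separate uniqueness argument), it bounds $z'(a)$ and $z'(c)$ in terms of $z(m)/(m-a)$ and $z(m)/(c-m)$ at the maximum point $m$, and finally writes $z'(a)-z'(c)=\int_a^c q_{+}z$ to conclude. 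Your route (integration by parts, Cauchy--Schwarz on each half-interval, and the AM--GM bound on $(c-a)(b-c)$) bypasses both the comparison step and the concavity discussion, and the regularity you cite from Section~\ref{prel} is exactly what is needed to make the integrations by parts legitimate. The trade-off is that the paper's concavity argument gives slightly more structural information about the extremal solution and feeds naturally into the Leighton reference it is adapting, whereas your proof is shorter and entirely self-contained.
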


\begin{proof} First, observe that the left hand side of \eqref{e17} cannot be zero since, if it were, this would imply that $q(x)\leq 0$ a.e. which, in turn, would imply that \eqref{e16} is disconjugate, \cite{ph}, i.e., no non-trivial solution can have more than one zero, contrary to the assumption. 

Since $q_{+}(x) \geq q(x)$ Sturm's comparison theorem implies that there is a solution $y$ of 
\begin{equation}\label{e18}
y^{\prime\prime}+q_{+}(x)y = 0
\end{equation}
such that $y(a)=0$, and $y(c)=0$, for some $c\in (a, b)$. Without loss of generality we may assume that $y(x)>0$ in $(a, c)$. Since $q$ is integrable over $I$ standard existence and uniqueness theorems under Carath\'{e}dory conditions \cite{er} imply that solutions to initial value problems associated with \eqref{e16} or \eqref{e18} are always absolutely continuous along with their first derivatives (and so in $C^1$). 

We show that $y$ cannot have a relative minimum in $(a, c)$. Assume, on the contrary, that $y$ does attain a relative minimum at $x=\gamma$, say $\gamma\in (a, c)$. Since $y$ is differentiable at $\gamma$, $y^{\prime}(\gamma)=0$. Thus, there is a $\delta >0$ such that $y^{\prime}(x)\leq 0$ for $x \in (\gamma-\delta, \gamma)$ and $y^{\prime}(x)\geq 0$ for $x \in (\gamma), \gamma+\delta)$. However $y^{\prime\prime}(x) \leq 0$ in $(a, c)$, thus, for $x \in (\gamma-\delta, \gamma)$ we must have $\int_{x}^{\gamma} y^{\prime\prime}(t)\, dt \leq 0$ i.e., $y^{\prime}(x)\geq 0$ there. Hence $y^{\prime}(x) =0$ a.e. in $(\gamma-\delta, \gamma)$. Similarly we can show that $y^{\prime}(x) =0$ in $(\gamma, \gamma +\delta)$.  Since $y^\prime$ is continuous in this neighborhood of $\gamma$, we must have $y(x)=C$, where $C$ is a constant, throughout $(\gamma-\delta, \gamma+\delta)$. Assuming there is a non-trivial solution $z$ such that $z(x)=C$ in $(\gamma-\delta, \gamma+\delta)$, then $z(\gamma)=C$ and $z^\prime(\gamma)=0$. Their difference $w=y-z$ is a solution of \eqref{e18} that satisfies $w(\gamma)=0, w^\prime(\gamma)=0$. Hence $w\equiv 0$, i.e., $y=z$ and so $y(x)=C$ throughout $(a, c)$ i.e, $y(x)\equiv 0$ throughout $[a, c]$, and therefore throughout $[a, b]$, which is impossible, again by uniqueness. Thus, our solution $y$ cannot have a relative minimum in $(a, c)$.

Now let $y$ attain its absolute maximum at $x=m$, say, $m\in (a, c)$. Since $y^\prime$ is not increasing there 
follows that $y^\prime(a) \geq y(m)/(m-a)$. Similarly, $y^\prime(c) \geq y(m)/(m-c)$. Thus,
$$\frac{-y^\prime(c)+y^\prime(a)}{y(m)} \geq \frac{1}{m-a}+\frac{1}{c-m} \geq  \frac{1}{m-a}+\frac{1}{b-m}=\frac{b-a}{(m-a)(b-m)}.$$
Since $y^\prime$ is absolutely continuous, we get (suppressing the variables of integration), 
$$ \frac{-y^\prime(c)+y^\prime(a)}{y(m)}= \frac{-1}{y(m)}\int_a^c y^{\prime\prime}\, dx = \frac{1}{y(m)}\int_a^c q_{+}\, y\, dx \geq \frac{b-a}{(m-a)(b-m)}.$$
However, $$ \frac{1}{y(m)}\int_a^c q_{+}\, y\, dx \leq \int_a^c q_{+}\, dx,$$
by construction. Therefore, 
$$\int_a^c q_{+}\, dx \geq \frac{b-a}{(m-a)(b-m)} \geq \inf_{x \in [a, b]} \frac{b-a}{(x-a)(b-x)} = \frac{4}{b-a}$$
and the result follows.
\end{proof}
We formulate a slightly more general result in the case our solution has many zeros (see \cite{imr} for the case where $q(x)\geq 0$ and has mean value equal to 1).

\begin{lemma}\label{rap} Let $q\in L^1(I)$ be real valued, $I=[a, b]$, and let $y$ be a solution of \eqref{e16} having $n$ interior zeros in $I$, $n\geq 0$, and vanishing at the endpoints. Then
\begin{equation}\label{e20}
\int_a^b q_{+}(x)\ dx \geq \frac{4(n+1)^2}{b-a},
\end{equation}
where $q_{+}(x) = (q(x)+|q(x)|)/2$ is the positive part of $q$.
\end{lemma}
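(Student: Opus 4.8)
The plan is to induct on $n$, the number of interior zeros of $y$, using Lemma~\ref{liap} both as the base case and as the tool that controls the first subinterval produced at each step. For the base case $n=0$, the function $y$ vanishes at $a$ and at $b$ and at no interior point, so $a$ and $b$ are two consecutive zeros of a solution of \eqref{e16}; Lemma~\ref{liap} then gives $\int_a^b q_+\,dx \geq 4/(b-a)$, which is exactly \eqref{e20} with $n=0$.

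For the inductive step, assume \eqref{e20} holds (on every compact interval) for solutions with $n-1$ interior zeros, and let $y$ solve \eqref{e16} with interior zeros $a=z_0<z_1<\cdots<z_n<z_{n+1}=b$. Restrict $y$ to $[a,z_1]$: it is a nontrivial solution of \eqref{e16} vanishing at both endpoints with no interior zero, so the base case yields $\int_a^{z_1} q_+\,dx \geq 4/(z_1-a)$. Restrict $y$ to $[z_1,b]$: it is a nontrivial solution vanishing at both endpoints whose interior zeros are precisely $z_2,\dots,z_n$, i.e. $n-1$ of them, so the inductive hypothesis gives $\int_{z_1}^{b} q_+\,dx \geq 4n^2/(b-z_1)$. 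Writing $L=b-a$ and $t=z_1-a\in(0,L)$, additivity of the integral of the nonnegative function $q_+$ over $[a,z_1]\cup[z_1,b]$ gives
\[
\int_a^b q_+\,dx \;\geq\; \frac{4}{t}+\frac{4n^2}{L-t}.
\]

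It then remains to show $\dfrac{1}{t}+\dfrac{n^2}{L-t}\geq \dfrac{(n+1)^2}{L}$ for all $t\in(0,L)$, which is the Cauchy--Schwarz inequality in Engel (Titu) form $\dfrac{a_1^2}{b_1}+\dfrac{a_2^2}{b_2}\geq \dfrac{(a_1+a_2)^2}{b_1+b_2}$ applied with $a_1=1$, $a_2=n$, $b_1=t$, $b_2=L-t$ (with equality at $t=L/(n+1)$); equivalently it is a one-variable minimization. Combining this with the displayed bound gives \eqref{e20}, completing the induction.

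I do not anticipate a genuine obstacle here; the only care needed is the bookkeeping in the inductive step — confirming that the restrictions of $y$ to $[a,z_1]$ and $[z_1,b]$ are again nontrivial solutions of \eqref{e16} with the asserted interior-zero counts (immediate, since $z_1$ is by definition the first interior zero and distinct zeros are isolated by the Carath\'eodory uniqueness theorem already invoked in Lemma~\ref{liap}), and that Lemma~\ref{liap} legitimately applies on $[a,z_1]$ with consecutive zeros at its endpoints. As a remark, one can bypass the induction altogether by applying Lemma~\ref{liap} on each of the $n+1$ consecutive-zero subintervals $[z_k,z_{k+1}]$, summing to get $\int_a^b q_+\,dx \geq 4\sum_{k=0}^{n} 1/\ell_k$ with $\ell_k=z_{k+1}-z_k$, and then using $\sum_k 1/\ell_k \geq (n+1)^2/\sum_k \ell_k = (n+1)^2/(b-a)$; I would keep the inductive argument as the primary proof to match the stated approach.
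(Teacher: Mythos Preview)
Your argument is correct. Both the inductive proof and the alternative you sketch in your final remark are valid; neither has a gap.

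The paper's written proof is actually your alternative, not your primary argument: it applies Lemma~\ref{liap} on each of the $n+1$ consecutive-zero subintervals $[x_m,x_{m+1}]$, sums to get $\int_a^b q_+\,dx \geq 4\sum_{m=0}^n 1/(x_{m+1}-x_m)$, and then minimizes $f(t_1,\dots,t_n)=\sum_m 1/(t_{m+1}-t_m)$ by a full multivariable-calculus computation (setting $\nabla f=0$, solving the resulting linear recurrence $t_m-2t_{m-1}+t_{m-2}=0$, and evaluating $f$ at the equispaced critical point). Your inductive route replaces that global minimization by a two-term Engel-form Cauchy--Schwarz at each step, which is cleaner and avoids the calculus entirely; your non-inductive remark replaces it by the $(n+1)$-term AM--HM inequality $\sum_k 1/\ell_k \geq (n+1)^2/\sum_k \ell_k$, which is shorter still. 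Amusingly, the paragraph preceding the lemma advertises a proof that ``uses induction on the number of zeros,'' so your primary write-up matches the author's stated intent better than the proof that actually appears.
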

\begin{proof} As in Lemma~\ref{liap} we assume that $y(a)=y(b)=0$. Let $x_0=a< x_1 < x_2 < x_3 < \cdots < x_n < b=x_{n+1}$ denote the $n$ interior zeros of $y$. The case $n=1$ is a consequence of Lemma~\ref{liap}. Since, for every $m=0, 1, 2, \ldots, n$, we have $y(x_m)=y(x_{m+1})=0$, Lemma~\ref{liap} implies that 
\begin{equation}\label{ih1}
\int_{x_m}^{x_{m+1}} q_{+}(x)\, dx \geq \frac{4}{x_{m+1}-x_m}.
\end{equation}
for each such $m$. 
Therefore,
\begin{equation}\label{ih2}
\int_a^{b} q_{+}(x)\, dx \geq \sum_{m=0}^n \frac{4}{x_{m+1}-x_m}.
 \end{equation}
Writing
$$ \sum_{m=0}^n \frac{1}{t_{m+1}-t_m} \equiv f(t_1, t_2, \ldots, t_n)$$
where $t_0=a, t_{n+1}=b$, we proceed to calculate the minimum of $f$ over the bounded set $V$ of points $(t_1, t_2, \ldots,t_n)$ satisfying
$$a< t_1 < t_2 < t_3 < \cdots < t_n < b.$$
A straightforward calculation gives that $\nabla f ={\bf 0}$ iff
\begin{equation}\label{ih3}
(t_1-a)^2=(t_2-t_1)^2 = \cdots = (t_n-t_{n-1})^2 = (b-t_n)^2.
\end{equation}
Taking into account the fact that the $t_m$ are all distinct and increasing we get a system of $n$ equations in $n$ unknowns (recall that $t_0=a, t_{n+1}=b$) i.e.,
$$t_m -2t_{m-1}+t_{m-2}=0, \quad m=0,1,2,\ldots, n+1.$$
The system may be solved recursively yielding the solutions
\begin{equation}\label{ih4}
t^*_m  = \frac{(n-m+1)a+mb}{n+1} , \quad m=1, 2, \ldots, n.
\end{equation}
Finally, a simple though lengthy calculation gives that
$$ \inf_{(t_1, t_2, \ldots,t_n)\in V} f(t_1, t_2, \ldots, t_n) = f(t^*_1, t^*_2, \ldots, t^*_n) = \frac{(n+1)^2}{b-a}.$$
Equation \eqref{ih2} now implies that
$$\int_a^{b} q_{+}(x)\, dx \geq 4 f(t^*_1, t^*_2, \ldots, t^*_n) = \frac{4(n+1)^2}{b-a},$$
as required.
\end{proof}
These two lemmas can be used to give bounds on both the Richardson, $n_R$, and Haupt, $n_H$, indices.

\begin{theorem}\label{th4} Let $p\equiv 1$, $w, q \in L^1(a, b)$ and $\int_a^b w_+(x)\, dx >0$.
For $\lambda > 0$, let there be an eigenfunction corresponding to a real eigenvalue $\lambda_{n_R}$ of \eqref{e1} having $n_R$ zeros in $(a, b)$ (where we recall, by definition, that there cannot be any real eigenfunction having fewer than $n_R$ zeros in $(a, b)$). Then, $$\lambda_{n_R} \leq  \Lambda_R,$$ $$\lambda_{n_R} =  \Lambda_H,$$ where $\Lambda_H$ is the Haupt number  and
$$n_R + 1 \leq \left (\frac{b-a}{4} \left \{ \Lambda_{H}\int_a^b w_+(x)\, dx + \int_a^b q_-(x) \,dx\right \} \right)^{1/2}.$$
\begin{proof}  Since the number of zeros of any real eigenfunction may increase and even decrease as the eigenvalue parameter increases  (and this only a finite number of times, see \cite{abm2}) we see that $\lambda_{n_R} \leq \Lambda_R$. That $\lambda_{n_R} =  \Lambda_H,$ is clear from the definitions. 

Using Lemma~\ref{rap} we find,
$$\int_a^b (\lambda_{n_R} w(x) - q(x) )_+ \, dx \geq \frac{4(n_R+1)^2}{b-a}.$$
However, for any $\lambda>0$ there holds, $(\lambda w(x) - q(x) )_+  \leq \lambda r_+(x)  +  q_-(x)$. tince $\lambda_{n_R} =  \Lambda_H,$ the result follows.
\end{proof}
\end{theorem}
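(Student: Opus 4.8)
The plan is to read Theorem~\ref{th4} as three separate assertions and dispatch each in turn, then combine them. First, the two chain of inequalities $\lambda_{n_R}\le\Lambda_R$ and $\lambda_{n_R}=\Lambda_H$ are essentially bookkeeping about the definitions given in the introduction. For $\lambda_{n_R}\le\Lambda_R$: by definition $\Lambda_R$ is the smallest eigenvalue beyond which at least one eigenfunction oscillates $n$ times for every large $n$; since $\lambda_{n_R}$ is the eigenvalue attached to the first eigenfunction realizing the Richardson index $n_R$ (the minimal possible oscillation count), and past $\lambda_{n_R}$ the oscillation counts can only move around finitely many times before settling into the ``at least one with $n$ zeros'' regime guaranteed by the Haupt--Richardson theorem (Theorem~\ref{hr}), we get $\lambda_{n_R}\le\Lambda_R$. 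For $\lambda_{n_R}=\Lambda_H$: the Haupt number $\Lambda_H$ is by definition the smallest eigenvalue past which we have \emph{at least one} eigenfunction oscillating $n$ times, and the eigenvalue $\lambda_{n_R}$ carrying the first-appearing minimal oscillation count is exactly this threshold by construction; I would just unwind both definitions and note they coincide.

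Next, the quantitative bound. Here the key input is Lemma~\ref{rap} applied to the eigenfunction $y$ realizing $n_R$ zeros. With $p\equiv 1$, equation \eqref{e1} reads $-y''+qy=\lambda_{n_R}wy$, i.e. $y''+(\lambda_{n_R}w-q)y=0$, and $y$ vanishes at $a,b$ with exactly $n_R$ interior zeros. Lemma~\ref{rap} with the coefficient $\lambda_{n_R}w-q$ in place of $q$ gives
\[
\int_a^b (\lambda_{n_R}w(x)-q(x))_+\,dx \;\ge\; \frac{4(n_R+1)^2}{b-a}.
\]
Then I bound the left side pointwise: for $\lambda>0$ one has $(\lambda w-q)_+\le \lambda w_+ + q_-$, since $\lambda w - q \le \lambda w_+ + q_-$ and the right side is nonnegative. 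Integrating and substituting $\lambda_{n_R}=\Lambda_H$ yields
\[
\frac{4(n_R+1)^2}{b-a}\;\le\;\Lambda_H\int_a^b w_+(x)\,dx+\int_a^b q_-(x)\,dx,
\]
and solving for $n_R+1$ gives the stated estimate. The hypothesis $\int_a^b w_+\,dx>0$ is what makes the right side genuinely finite and the problem non-degenerate in the $\lambda>0$ direction.

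The only real obstacle I anticipate is checking the applicability of Lemma~\ref{rap} cleanly: one must confirm that $\lambda_{n_R}w-q\in L^1(a,b)$ (immediate from $w,q\in L^1$) and that the eigenfunction $y$ really does vanish at both endpoints with precisely $n_R$ interior zeros — the former is the Dirichlet condition in \eqref{e1}, the latter is the meaning of ``having $n_R$ zeros in $(a,b)$'' as stated in the hypothesis, so no extra work is needed. A minor subtlety worth a sentence is that Lemma~\ref{rap}'s conclusion is only useful if the positive part does not vanish identically; but if $\int_a^b(\lambda_{n_R}w-q)_+\,dx=0$ then the equation would be disconjugate and $y$ could not have interior zeros, contradicting $n_R\ge 0$ together with the existence of the oscillating eigenfunction (when $n_R=0$ the bound $1\le(\cdots)^{1/2}$ still must be verified, and it holds because the right side is then at least $1$ by the same Lyapunov-type reasoning via Lemma~\ref{liap}). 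Beyond that, the argument is a direct substitution, so I would keep the write-up short, exactly as the excerpt does.
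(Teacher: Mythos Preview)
Your proposal is correct and follows essentially the same approach as the paper: unwind the definitions for the two relations $\lambda_{n_R}\le\Lambda_R$ and $\lambda_{n_R}=\Lambda_H$, then apply Lemma~\ref{rap} to the eigenfunction equation rewritten as $y''+(\lambda_{n_R}w-q)y=0$, bound $(\lambda w-q)_+\le\lambda w_++q_-$ pointwise, integrate, and substitute $\lambda_{n_R}=\Lambda_H$. Your additional remarks on checking the $L^1$ hypothesis and the disconjugacy edge case are sound but not needed beyond what the paper already records.
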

Given the definition of the Haupt-Richardson indices and numbers we know that 
$n_R \leq n_H.$ The next result is an immediate consequence of Theorem~\ref{th4}.
\begin{theorem}\label{th5} Let $\lambda_{n_H}$ be the smallest positive eigenvalue whose eigenfunction has $n_H$ zeros in $(a, b)$. Then, $$\lambda_{n_H} = \Lambda_{R}$$
and 
$$n_H + 1 \leq \left (\frac{b-a}{4} \left \{ \Lambda_{R}\int_a^b w_+(x)\, dx + \int_a^b q_-(x) \,dx\right \} \right)^{1/2},$$
where $\Lambda_{R}$ is the Richardson number.
\end{theorem}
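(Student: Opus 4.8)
The plan is to re-run the proof of Theorem~\ref{th4} with $n_H$ in place of $n_R$; the only step that is not purely mechanical is the identification $\lambda_{n_H}=\Lambda_R$, and once that is granted the estimate falls straight out of Lemma~\ref{rap}.

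First I would unwind the Haupt index. By definition of $n_H$, for every $n\geq n_H$ there is exactly one eigenfunction of \eqref{e1} oscillating $n$ times in $(a,b)$; write $\mu_n$ for the associated eigenvalue. Using the qualitative picture of the non-definite spectrum from \cite{abm2} — the oscillation count of an eigenfunction changes only finitely often and is eventually non-decreasing in the eigenvalue — the numbers $\mu_n$, $n\geq n_H$, are strictly increasing, $\mu_{n_H}<\mu_{n_H+1}<\cdots$, and they account for every eigenvalue lying at or above $\mu_{n_H}$; thus for each eigenvalue $\lambda\geq\mu_{n_H}$ there is exactly one eigenfunction realising its (large) oscillation count. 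Since by hypothesis $\lambda_{n_H}=\mu_{n_H}$ is the smallest positive eigenvalue whose eigenfunction has $n_H$ zeros, this is exactly the property that defines the Richardson number, and therefore $\lambda_{n_H}=\Lambda_R$. This is the exact counterpart of the equality $\lambda_{n_R}=\Lambda_H$ invoked in Theorem~\ref{th4}.

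With this in hand I would let $y$ be the eigenfunction belonging to $\lambda_{n_H}$. As $p\equiv 1$, it solves $y^{\prime\prime}+(\lambda_{n_H}w-q)\,y=0$ on $[a,b]$ with $y(a)=y(b)=0$ and with exactly $n_H$ interior zeros, so Lemma~\ref{rap} applies and gives
$$\int_a^b\bigl(\lambda_{n_H}w(x)-q(x)\bigr)_{+}\,dx\ \geq\ \frac{4\,(n_H+1)^2}{b-a}.$$
Since $\lambda_{n_H}>0$ one has the pointwise inequality $(\lambda_{n_H}w-q)_{+}\leq\lambda_{n_H}\,w_{+}+q_{-}$, and replacing $\lambda_{n_H}$ by $\Lambda_R$ turns the previous display into
$$\frac{4\,(n_H+1)^2}{b-a}\ \leq\ \Lambda_R\int_a^b w_{+}(x)\,dx+\int_a^b q_{-}(x)\,dx.$$
Rearranging — multiplying by $(b-a)/4$ and taking square roots — yields exactly the asserted bound on $n_H+1$. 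The inequality $n_R\leq n_H$ recalled just before the statement plays no role in the argument; it only makes the comparison with Theorem~\ref{th4} transparent.

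The estimation half is entirely routine, so the step I expect to be the real obstacle is the equality $\lambda_{n_H}=\Lambda_R$. It genuinely uses the finiteness and eventual monotonicity results of \cite{abm2}: one needs to know that above $\lambda_{n_H}$ the oscillation count never falls back below $n_H$ and never repeats a level, so that the regime starting at $\lambda_{n_H}$ is precisely the ``exactly one eigenfunction per level'' regime singled out in the definition of $\Lambda_R$. Were one to take $\Lambda_R$ as the definition of the Haupt-index eigenvalue, the theorem would reduce to the pure estimate above; in the present framework the two descriptions agree, which completes the proof.
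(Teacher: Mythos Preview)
Your proposal is correct and follows essentially the same route as the paper, which simply declares Theorem~\ref{th5} an immediate consequence of Theorem~\ref{th4}: one re-runs that argument with $n_H$ in place of $n_R$, taking $\lambda_{n_H}=\Lambda_R$ from the definitions (with the same appeal to \cite{abm2} for the eventual monotonicity of oscillation counts) and then applying Lemma~\ref{rap} together with the pointwise bound $(\lambda w-q)_+\leq \lambda w_+ + q_-$. Your write-up is in fact more explicit than the paper's on why $\lambda_{n_H}=\Lambda_R$, but the underlying reasoning is the same.
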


Theorems~\ref{th4}, \ref{th5} provide a lower bound on both $\Lambda_R$ and $\Lambda_H$. Other lower bounds were discussed in \cite{aj} but for particular cases of the weight function. 

Further lower bounds may be obtained indirectly using Sturm's comparison theorem. For example, for $\lambda > 0$, since $\lambda w(x) - q(x) \leq \lambda |w(x)|-q(x)$ we can compare \eqref{e1} with 
\begin{equation}\label{e1z}
 - (p(x)z^\prime)^\prime + q(x)\, z =\mu |w(x)|\, z,\quad \quad z(a) = 0 = z(b),
\end{equation}
for which theoretical lower bounds on its eigenvalues $\mu_n$ are many. Thus, if $\Lambda_R$ is the Richardson number for \eqref{e1}, then $\Lambda_R \geq \mu_{n_H}$ where $\mu_{n_H}$ is that eigenvalue of \eqref{e1z} whose eigenfunction, assuming it exists, has precisely $n_H$ zeros in $(a, b)$. 

{\bf Note:} The non-existence of such an eigenfunction is only possible in the case where $w(x)=0$ on one (or more) sets of positive Lebesgue measure since on such a set $E$, say, we may choose $q$  in such a way that all the solutions of \eqref{e1z} will have at least as many zeros on $[a, b]$ as the equation $ - (p(x)z^\prime)^\prime + q(x)\, z =0$ does for $x \in E$ (as $\mu$ will not appear in the equation but all eigenfunctions must have at least that many zeros). 

Setting aside such an anomaly for the moment and assuming $p(x)>0$, $w(x)>0$ and $p, q, w$ are all continuous on $[a, b]$ one can deduce that, for all $n \geq 0$, (see [\cite{bg}, Theorem 1]) ,
$$\mu_n > \inf_{x\in [a,b]} \frac{q(x)}{|w(x)|}+ \frac{(n+1)^2 \pi^2}{c^2 \left (\int_a^b  dx/p(x) \right)^2}$$
where $c^2=||wp||_{\infty}$, and the {\it inf} is assumed to exist. So, since $\Lambda_R \geq \mu_{n_H}$, we have
$$\Lambda_R > \inf_{x\in [a,b]} \frac{q(x)}{|w(x)|}+ \frac{(n_H+1)^2 \pi^2}{c^2 \left (\int_a^b  dx/p(x) \right)^2}.$$
The Haupt number, $\Lambda_{H}$, may be estimated similarly and we leave the details to the reader.

\begin{remark}Upper bounds on $\Lambda_R$ may be found in [\cite{aj}, Propositions 3, 4]. More results in this direction, along with numerical estimations, may be found in \cite{km}, \cite{mkm}, in the case where $w(x)$ has two turning points (i.e., sign changes) in $(a, b)$. 
\end{remark}

\begin{example}
In [\cite{aj}, p.41] there are numerical calculations indicating that in the case where $p(x)=1$, $q(x)= - 22$ for all $x\in [a, b]= [-1, 1]$, $w(x) = \sgn x$, the Richardson index, $n_R=2$, the Haupt index, $n_H=3$, the Richardson number,  $\Lambda_{R} \approx 5.7069$. The upper bound in Theorem~\ref{th4} gives $n_R \leq 5.845$, in agreement with the numerics.
\end{example}

\begin{example}
In the case where $p(x)=1$, $q(x) = - 41.9$ for all $x\in [a, b]= [-1, 1]$, $w(x) = \sgn x$ in [\cite{aj}, p.38], the Richardson index, $n_R=3$. Here, the smallest positive eigenvalue having an eigenfunction with 3 zeros is $\lambda_{n_R} \approx 23.3372$.  The upper bound in Theorem~\ref{th4} now gives $n_R \leq 8.771$, also in agreement with the numerics.
\end{example}

\section{Conclusions}

We have estimated the Haupt index, Richardson index, Haupt number, and Richardson number associated with non-definite Sturm-Liouville problems \cite{abm}.  In so doing, we have shown that the larger the number of non-real pairs of eigenvalues (and degenerate real ghosts) the larger the smallest oscillation number of the (real) eigenfunctions corresponding to real eigenvalues thereby proving a conjecture in \cite{atm2}. This {\it loss} of oscillation numbers provides a sharp contrast to Sturm's classical oscillation theorem. Thus, we have complemented the original estimates in [\cite{alm2}, Theorem 3.1]  with the estimates in the main theorems herein.

\section{Acknowledgement} 

The author should like to thank the referee for useful suggestions.

\end{document}